\newtheorem{theorem}{Theorem}[section]
\newtheorem{lemma}[theorem]{Lemma}
\theoremstyle{definition}
\newtheorem{definition}[theorem]{Definition}
\newtheorem{proposition}[theorem]{Proposition}
\newtheorem{example}[theorem]{Example}
\newtheorem{corollary}[theorem]{Corollary}
\theoremstyle{remark}
\newtheorem{remark}[theorem]{Remark}
\numberwithin{equation}{section}
\newcommand{\C}{\mathbb{C}}
\newcommand{\Z}{\mathbb{Z}}
\newcommand{\N}{\mathbb{N}}
\newcommand{\Q}{\mathbb{Q}}
\newcommand{\Pro}{\mathbb{P}}
\newcommand{\Pic}{\mathrm{Pic}}
\newcommand{\Supp}{\mathrm{Supp}\,}
\newcommand{\Ker}{\mathrm{Ker}}
\newcommand{\Bk}{\mathrm{Bk}}
\newcommand{\rank}{\mathrm{rank}\,}
\newcommand{\Oc}{\mathcal{O}}
\newcommand\co{\colon\thinspace}
\crefname{claim}{Cliam}{Claim}
\crefname{corollary}{Corollary}{Conjecture}
\crefname{theorem}{Theorem}{Theorem}
\crefname{conjecture}{Conjecture}{Conjecture}
\crefname{definition}{Definition}{Definition}
\crefname{equation}{Equation}{Eq.}
\crefname{example}{Example}{Example}
\crefname{lemma}{Lemma}{Lemma}
\crefname{proposition}{Proposition}{Proposition}
\crefname{remark}{Remark}{Remark}
\begin{document}
	
	\title[Log canonical maps of open algberaic surfaces]{On open algebraic surfaces of general type whose log canonical maps composed of a pencil}
	
	\author{Hang Zhao}

	\address{ Hang Zhao\\School of Mathematics and Statistics \\ Yunnan University \\  Kunming, Yunnan 650091, PR China}
	\email{zhaoh@ynu.edu.cn}
	\date{\today}
	\thanks{The author was supported in part by the science and technology innovation fund of Yunnan University: ST20210105.}
	\subjclass[2010]{14J50, 14J30}

	
	
	
	\keywords{Open algebraic surfaces, Log canonical linear system, composed of a pencil}
	
	
	\maketitle
	
	\begin{abstract}
		Let $(S,D)$ be a minimal log pair of general type with $S$ a smooth projective surface and $D$ a simple normal corssing reduced divisor on $S$. We assume that its log canonial linear system $|K_S+D|$ is composed of a penciel, let $f\co S\to B$ be the fiberation induced by the linear system $|K_S+D|$ and $F$ be a general fiber of $f$. Let $b$ (resp. $g$) be the genus of the base curve $B$ (resp. general fiber $F$) and $k=D\cdot F$ the intersection number. We show that \begin{enumerate}
			\item If $k>0$ and $b\geq 2$ then $2\leq g+k \leq 3$, when $g+k=3$ we have $b=2$ and $h^1(S,K_S+D)=0$.
			\item Suppose $p_a(D)\leq 2(l+q(S))+1-h^{1,1}(S)$ where $l$ is the number of irreducible components of $D$, then we have $g\leq 5$ for $p_g(S,D)\gg 0$. Moreover if $p_g(S)=0$, then we have $g\leq 3$.
		\end{enumerate}

		
	\end{abstract}

	\tableofcontents
	\section{Introduction}
	
	The pluricanonical maps of a surface with non-negative kodaira dimension play a important role in the calssification of algebraic surfaces. Let $S$ be a surface, and let $K_S$ be a canonical divisor. Suppose the $n$-genus $P_n(S)=h^0(S,\Oc_S(nK_S))\geq 2$, we can define a rational map $\phi_n\co S\dashrightarrow\Pro^{P_n(S)-1}$, which is called the \emph{$n$th-pluricanonical map} of $S$. Bombieri \cite{Bo73} showed that the $n$th-pluricanonical maps $\phi_n$ are embeddings when $n\geq 5$ and are birational when $n\geq 3$ except for finite many cases; the bicanonical map $\phi_2$ is birational if $K_S^2\geq 10$ and $p_g(S)\geq 6$ except for $S$ admiting a genus 2 fiberation, in which case $\phi_2$ is generically a double covering.
	
   The image $\Sigma$ of the canonical map $\phi_1\co S\dashrightarrow\Pro^{p_g(S)-1}$ is either a surface or a curve. If $\Sigma$ is a curve, we say that the canonical linear system $|K_S|$ is composed of a pencil. Let $f\co X\to B$ be the fiberation induced by the rational map $\phi_1$, i.e., $\mu\co X\to S$ resolve the base locus of $|K_S|$ and take $f$ to be the Stein factorization of the composition morphism $\phi_1\circ\mu \co X\to\Sigma$.
	Let $g,b$ and $q$ be the genus of a general fiber of $f$, the genus of $B$ and the irregularity $q(S):=h^0(S,\Oc_S)$ of $S$. Beauville \cite{Be79} showed that $2\leq g\leq 5$ if $\chi(S,\Oc_S)\geq 21.$ Later, Xiao \cite{Xiao85} showed that either $b=q=1$ or $b=0,q\leq 2$.
	
	If $\Sigma$ is a surface. Beauville \cite{Be79} showed that either $p_g(\Sigma)=0$ or $p_g(\Sigma)=p_g(S)$ in which case $\Sigma$ is of general type. In the first case, if $\chi(S,\Oc_S)\geq 31$, then $\deg\phi_1\leq 9$; in the second case, if $\chi(S,\Oc_S)\geq 14$, then $\deg\phi_1 \leq 3$. More refiment results can be find in \cite{Xiao86}.

	In this paper we will generalize the above result to open algebraic surfaces, for general references here are \cite{It76,It77}.  Let $S^0$ be an open algebraic surface, by Nagata and Hironaka, we can embed it into a complete algebraic surface $S^0\hookrightarrow S$ such that the complement $D=S\setminus S^0$ is a divisor with simple normal corssing singularities.  It is well known that cohomology groups of many vector bundles constructed using logarithmic forms  with poles algong $D$ depend only on $S^0$. Thus if we want to study properties of $S^0$, it is natural to consider the properties of the divisor $K_S+D$ in the spirit of the theory of birational geometry. Thus it is convenient to consider a log pair $(S,D)$ consists of a smooth projective surface $S$ and a reduced effective divisor $D$ on $S$ with only simple normal crossing singularities, such a pair is also called \emph{log smooth surface}. We will consider a \emph{minimal} log smooth surface $(S,D)$, in the sence of Tsunoda which he called \emph{almost minimal} (see for instance \cite{TFu79,ST83}).
	\begin{definition}\label{def: minimal log surfaces}
		We say that a projective log smooth surface $(S,D)$ is \emph{minimal} if $K_S+D$ has the \emph{Zariski-Fujita decomposition }$K_S+D=(K_S+D^{\sharp})+\Bk(D)$ satisfying the following conditions:
		\begin{enumerate}
			\item $\Bk(D)$ is an effecitve $\Q$-divisor such that $\Bk(D)\leq D$;
			\item $K_S+D^{\sharp}$ is nef;
			\item the intersection matrix $\Bk(D)$ is negative defined;
			\item $(K_S+D^{\sharp})\cdot C=0$ for any prime component $C$ of $\Bk(D)$.
		\end{enumerate}
	\end{definition} 
	
	We can define analogous invariants of a log surface as usual surface: $p_g(S,D):=h^0(S,K_S+D)$ the \emph{log geometric genus}, $q(S,D)=h^0(S,\Omega_S^1(D))$ the \emph{log irregularity}, $\kappa(S,D)=\kappa(K_S+D)$ the \emph{log Kodaira dimension}. We say that a log surface $(S,D)$ is of \emph{log general type} if $\kappa(S,D)=2$ or $(K_S+D^{\sharp})^2>0$. 
	
	The log canonical map $\phi\co S\dashrightarrow \Pro^{p_g(S,D)-1}$ has been study by many authors. Sakai \cite{Sa80} considered the case when $D$ is a stable curve and proved a Noether type inequality and a generalization of Bogomolov-Miyaoka-Yau inequality. Soon afterwards, Tsunoda and De-Qi Zhang \cite{TZ92} generalized  the result to the case when $(S,D)$ is minimal log smooth and of log general type.
	 
	 Our purpose is to consider minimal log surfaces $(S,D)$ of log general type whose log canonical linear system $|K_S+D|$ is composed of a penciel. We can assume, by blowing up at the base points of $|K_S+D|$ if necessary, that $|K_S+D|$ is base point free (see Lemma \ref{lem:  invariants}). Let $\phi\co S\to \Sigma\subset\Pro^{p_g(S,D)-1}$ be the correspongding morphism, then $\Sigma$ is a curve. Let $f\co S\to B$ be the Stein factorization of $\phi\co S\to \Sigma$. We will study the fiberation $f\co S\to B$ induced by the linear system $|K_S+D|$. Let $g$ and $b$ denote the genus of a general fiber $F$ of $f$ and the base curve $B$ respectively. It is of interest to know the possibilities of invariants $g,b$ and the intersection numbers $k=D\cdot F$.
	 The main results of this paper are the following.
	 
	 \begin{theorem}\label{thm: main}
	 	With the notations as above. Then we have
	 	\begin{enumerate}
	 		\item If $k>0$ and $b\geq 2$ then $2\leq g+k \leq 3$, when $g+k=3$ we have $b=2$ and $h^1(S,K_S+D)=0$.
	 		\item Suppose $p_a(D)\leq 2(l+q(S))+1-h^{1,1}(S)$ where $l$ is the number of irreducible components of $D$, then we have $3\leq 2g+k\leq 9$ for $p_g(S,D)\gg 0$. Moreover if $p_g(X)=0$, then we have $2g+k\leq 6$.
	 	\end{enumerate}
	 \end{theorem}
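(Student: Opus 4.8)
The plan is to analyze the fibration $f\co S\to B$ through the interplay of the adjunction formula on a general fibre, the positivity of a suitable direct image sheaf, and the nefness of $K_S+D^{\sharp}$. First I would normalize the situation: after the blow-ups of the reduction lemma we may assume $|K_S+D|$ is base point free, so that $K_S+D=M+Z$ with $M=f^{*}L$ the moving part, $Z\geq 0$ the fixed part, $L$ a line bundle on $B$ of degree $a:=\deg L$, and $M\equiv aF$. Since $|K_S+D|$ is composed of the pencil $|F|$ we have $p_g(S,D)=h^0(B,L)$, and as $|L|$ maps $B$ birationally onto a nondegenerate curve in $\Pro^{p_g(S,D)-1}$ we get $a\geq p_g(S,D)-1$. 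Adjunction on a general fibre gives $(K_S+D)\cdot F=\deg(K_F+D|_F)=2g-2+k$, and since $M\cdot F=0$ this yields the basic identity $Z\cdot F=2g-2+k$. I then introduce the relative sheaf $V:=f_{*}\bigl(\omega_{S/B}\otimes\Oc_S(D)\bigr)$, a vector bundle on $B$ whose rank is $h^0(F,K_F+D|_F)$; by Riemann--Roch and Serre duality on the fibre this rank equals $g+k-1$ when $k>0$ (the term $h^1(F,K_F+D|_F)=h^0(F,-D|_F)$ vanishing) and equals $g$ when $k=0$.

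The lower bounds come from the restriction sequence. For $k>0$, taking cohomology in $0\to\Oc_S(K_S+D-F)\to\Oc_S(K_S+D)\to\Oc_F(K_F+D|_F)\to0$ and using that a system composed of the pencil drops $h^0$ by exactly one when a general fibre is subtracted, I obtain $1\leq g+k-1$, i.e.\ $g+k\geq 2$; the value $(g,k)=(0,2)$ is excluded because it would give $Z\cdot F=0$, forcing $K_S+D$ to be numerically trivial on fibres and hence $\kappa(S,D)\leq 1$, against log general type. The same exclusion gives $Z\cdot F\geq 1$ in general, i.e.\ the lower bound $2g+k\geq 3$ of part (2).

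For the upper bounds I would exploit the nef divisor $P:=K_S+D^{\sharp}$. Using $P\cdot C=0$ for every component $C$ of $\Bk(D)$, one has $P^2=P\cdot(K_S+D)=a\,(P\cdot F)+P\cdot Z$, and with $P\cdot F=2g-2+k^{\sharp}$ (where $k^{\sharp}:=D^{\sharp}\cdot F$), $P\cdot Z\geq 0$ and $a\geq p_g(S,D)-1$ this gives
\[
P^2\ \geq\ \bigl(p_g(S,D)-1\bigr)\bigl(2g-2+k^{\sharp}\bigr).
\]
The heart of the matter, and the step I expect to be the main obstacle, is to bound $P^2$ from above in a way compatible with the fibration being composed of a pencil. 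Here I would invoke a logarithmic semipositivity theorem to conclude that $V$ is nef, and then a logarithmic analogue of Xiao's slope inequality \cite{Xiao85} relating $(K_{S/B}+D)^2$ to $\deg V$ and the fibre rank $r=g+k-1$; the composed-of-pencil hypothesis forces $M^2=0$ and makes $(K_{S/B}+D)^2$ essentially vertical, so that the slope bound, read against the base through $K_{S/B}=K_S-f^{*}K_B$, caps the rank $r$. For part (1) the hypothesis $b\geq 2$ enters precisely through the term $f^{*}K_B\equiv(2b-2)F$: it makes the vertical part so large that $r\leq 2$, i.e.\ $g+k\leq 3$; tracking the equality case then pins down $b=2$ and forces $L$ to be nonspecial, whence $h^1(S,K_S+D)=0$. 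Establishing the sharp constant in this log slope inequality, and controlling $\Bk(D)$ so that $k^{\sharp}$ may be replaced by $k$, is the delicate point.

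For part (2) the strategy is to reduce to the classical Beauville--Xiao regime. Since $f^{*}\co H^0(B,\Omega^1_B)\hookrightarrow H^0(S,\Omega^1_S)$ we always have $b\leq q(S)$; the hypothesis $p_a(D)\leq 2(l+q(S))+1-h^{1,1}(S)$ is used to control the log-theoretic correction terms---via the Riemann--Roch identity $p_g(S,D)=p_g(S)-q(S)+p_a(D)+h^1(S,K_S+D)$ together with the description of $h^1(S,K_S+D)=h^1(S,\Oc_S(-D))$ coming from $0\to\Oc_S(-D)\to\Oc_S\to\Oc_D\to0$---so as to force $b\leq 1$ and keep $\deg V$ of the expected size. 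In the regime $p_g(S,D)\gg 0$ the displayed inequality for $P^2$, combined with the upper slope bound, behaves asymptotically as in Beauville's argument \cite{Be79}: letting $p_g(S,D)\to\infty$ the factor $2g-2+k^{\sharp}$ must stay bounded, and a Clifford--Castelnuovo estimate on the image curve fixes the constant, yielding $2g+k\leq 9$. Finally, if $p_g(S)=0$ then $\chi(\Oc_S)=1-q(S)$ and the holomorphic two-forms contribute nothing, so the same chain of inequalities sharpens to $2g+k\leq 6$. The main obstacles throughout are the two quantitative inputs---the log slope inequality with its sharp constant and equality analysis, and the translation of the numerical hypothesis on $p_a(D)$ into the bound $b\leq 1$---which I would treat as the technical core of the argument.
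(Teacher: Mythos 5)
Your setup (base-point-free reduction, $\rank f_*\omega_{S/B}(D)=g+k-1$, $n\geq p_g(S,D)-1$, nefness of the direct image) matches the paper's, but both decisive steps are left by you as acknowledged ``delicate points,'' and in each case the mechanism you propose is not the one that works. For part (1) the paper uses no slope inequality at all: it takes the line subbundle $\mathcal{L}\subset f_*\omega_S(D)$ generically generated by global sections (which captures \emph{all} sections precisely because $|K_S+D|$ is composed of a pencil, so $h^0(B,\mathcal{L})=h^0(B,f_*\omega_S(D))=p_g(S,D)$), forms $0\to\mathcal{L}\to f_*\omega_S(D)\to\mathcal{Q}\to 0$ with $\rank\mathcal{Q}=g+k-2$, and plays two estimates against each other: nefness of $\mathcal{Q}\otimes\omega_B^{-1}$ gives $h^0(B,\mathcal{Q})\geq\chi(B,\mathcal{Q})\geq(b-1)(g+k-2)$, while the cohomology sequence gives $h^0(B,\mathcal{Q})\leq h^1(B,\mathcal{L})=h^0(B,\omega_B\otimes\mathcal{L}^{-1})\leq b-1$ since $\deg\mathcal{L}>0$. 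Hence $(b-1)(g+k-2)\leq b-1$, which is the entire content of $g+k\leq 3$ for $b\geq 2$; the equality case then follows from Clifford's inequality applied to $\omega_B\otimes\mathcal{L}^{-1}$ (giving $\deg\mathcal{L}\leq 2$) against $\deg f_*\omega_S(D)\geq 4(b-1)$, forcing $b=2$, and $h^1(S,K_S+D)=h^1(B,f_*\omega_S(D))=h^1(B,\mathcal{L})-\chi(B,\mathcal{Q})\leq(b-1)(3-g-k)=0$. A logarithmic slope inequality with a sharp constant is neither proved in the paper nor needed, so as written your part (1) has a genuine gap exactly where the argument should be.

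For part (2) your reading of the hypothesis $p_a(D)\leq 2(l+q(S))+1-h^{1,1}(S)$ is off: it is not used to force $b\leq 1$ (the paper never bounds $b$ in this part, and I do not see how the hypothesis would do so). It is used to show $\chi(S,\Omega_S^1(\log D))\geq 0$, whence $e(S-D)=\bar{c}_2=\chi(\Oc_S)+\chi(K_S+D)-\chi(\Omega_S^1(\log D))\leq 2p_g(S,D)+1$, and $\leq p_g(S,D)+1$ when $p_g(S)=0$. This is precisely the upper bound on $P^2$ that you correctly identify as the missing ingredient: the log Bogomolov--Miyaoka--Yau inequality gives $\tfrac{1}{3}P^2\leq\bar{c}_2-\tfrac{1}{4}N^2$ with $0\geq N^2\geq -t$ bounded by the number of tips of $\Bk(D)$, so $P^2\leq 6p_g(S,D)+O(1)$; combined with the Tsunoda--Zhang Noether-type inequality $2g-2\leq\frac{n+2}{n^2}P^2$ and $n\geq p_g(S,D)-1$, this yields $g\leq 5$ (resp.\ $g\leq 3$) for $p_g(S,D)\gg 0$. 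Without this BMY input your chain of inequalities does not close, since the lower bound $P^2\geq(p_g(S,D)-1)(2g-2+k^{\sharp})$ alone gives nothing. Your lower bounds ($g+k\geq 2$ from $\rank f_*\omega_S(D)\geq 1$, and the exclusion of numerically trivial restriction to fibres) are fine.
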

 
	Noteworthy, our examples of $b=0$ demonstrates rather strikingly that $g,k$ can be arbitrarily large, see Example \ref{exm: g,k large}, note that $S$ are rational surfaces and they do not satisfy the inequality $p_a(D)\leq 2(l+q(S))+1-h^{1,1}(S)$ of  Theorem \ref{thm: main}.

	\section{Preliminary}
	 Throughout we work over an algebraically closed field of characteristic $0$. We set up notaion and terminology.
	
	A \emph{fiberation} is a projective surjective morphism with connected fibers between two normal varieties.
	
	We write $D\geq 0$ for an effective $\Q$-divisor on a normal variety $X$. Let $E$ be a prime divisor on $X$, we denote by $\mu_D(E)$ the coefficient of $E$ in $D$.
	
	\subsection{Nef vector bundles} In this subsection we recall some basic results on nef vector bundles that will be used throughtout the paper.
	\begin{definition}
		A vector bundle $\mathcal{E}$ on a variety $X$ is said to be \emph{nef} if the tautological line bundle $\Oc_{\Pro(\mathcal{E})}(1)$ is a nef line bundle on the projective bundle $\Pro_X(\mathcal{E})$ over $X$.
	\end{definition}
	
	\begin{proposition}\label{prop: nef vector  bundless}
		Let $\mathcal{E}$ be a vector bundle on a variety $X$.
		\begin{enumerate}
			\item If $\mathcal{E}$ is nef, then so is any quotient bundle $\mathcal{Q}$ of  $\mathcal{E}$.
			\item Let $f\co X\to Y$ be a finite morphism of varieties. If $\mathcal{E}$ is a nef vector bundle on $Y$, then $\mathcal{E}$ is nef if and only if the pull-back $f^*\mathcal{E}$ is a nef vector bundle on $X$.
			\item Let $\mathcal{E}$ be a vector bundle on a variety $X$, then $\mathcal{E}$ is nef if and only if given any finite morphism $\nu\co C \to X$ form a non-singular projective curve $C$ to $X$, and given any quotient line bundle $\mathcal{L}$ of $\nu^*\mathcal{E}$, one has $\deg\mathcal{L}\geq 0$.
		\end{enumerate}
	\end{proposition}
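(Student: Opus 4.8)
The plan is to reduce all three statements to the corresponding well-known facts for nef \emph{line} bundles, applied to the tautological bundle $\Oc_{\Pro(\mathcal{E})}(1)$ on the projective bundle $\pi\co\Pro_X(\mathcal{E})\to X$, which I take in the Grothendieck convention parametrizing rank-one quotients of $\mathcal{E}$. The line-bundle facts I would invoke as known are: (a) a nef line bundle restricts to a nef line bundle on any closed subvariety; (b) a nef line bundle pulls back to a nef line bundle under an arbitrary morphism, and conversely descends under a finite surjective morphism; and (c) a line bundle $L$ is nef if and only if $L\cdot\Gamma\geq0$ for every irreducible curve $\Gamma$. With these in hand, each assertion is a formal transfer along $\pi$.

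For (1), I would use that a surjection $\mathcal{E}\twoheadrightarrow\mathcal{Q}$ induces a closed immersion $i\co\Pro_X(\mathcal{Q})\hookrightarrow\Pro_X(\mathcal{E})$ over $X$ with $i^*\Oc_{\Pro(\mathcal{E})}(1)=\Oc_{\Pro(\mathcal{Q})}(1)$. Since $\mathcal{E}$ is nef, $\Oc_{\Pro(\mathcal{E})}(1)$ is nef, and by (a) its restriction to the closed subvariety $\Pro_X(\mathcal{Q})$ is nef; this restriction is exactly $\Oc_{\Pro(\mathcal{Q})}(1)$, so $\mathcal{Q}$ is nef. For (2), I form the fiber square: since $\Pro_X(f^*\mathcal{E})\cong\Pro_Y(\mathcal{E})\times_YX$, the induced morphism $F\co\Pro_X(f^*\mathcal{E})\to\Pro_Y(\mathcal{E})$ is a base change of the finite $f$, hence finite, and surjective when $f$ is; moreover $F^*\Oc_{\Pro(\mathcal{E})}(1)=\Oc_{\Pro(f^*\mathcal{E})}(1)$. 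The equivalence then follows from (b): nefness of $\Oc_{\Pro(\mathcal{E})}(1)$ passes to its $F$-pullback for any $f$, and descends along $F$ when $f$ is finite surjective.

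For (3), the forward implication is immediate from (1) and (2): if $\mathcal{E}$ is nef then $\nu^*\mathcal{E}$ is nef for any finite $\nu\co C\to X$, and its quotient line bundle $\mathcal{L}$ is nef, so $\deg\mathcal{L}\geq0$. For the converse I must show, using (c), that $\Oc_{\Pro(\mathcal{E})}(1)\cdot\Gamma\geq0$ for every irreducible curve $\Gamma\subset\Pro_X(\mathcal{E})$. Let $\nu_0\co C\to\Gamma$ be the normalization and set $\nu=\pi\circ\nu_0\co C\to X$. By the universal property of the projective bundle, the lift $\nu_0$ corresponds to a line-bundle quotient $\nu^*\mathcal{E}\twoheadrightarrow\mathcal{L}$ with $\mathcal{L}=\nu_0^*\Oc_{\Pro(\mathcal{E})}(1)$, whence $\Oc_{\Pro(\mathcal{E})}(1)\cdot\Gamma=\deg\mathcal{L}$. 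If $\nu$ is finite the hypothesis gives $\deg\mathcal{L}\geq0$ directly; if instead $\Gamma$ is contracted by $\pi$, then $\Gamma$ lies in a single fiber $\Pro(\mathcal{E}_x)\cong\Pro^{r-1}$ on which $\Oc_{\Pro(\mathcal{E})}(1)$ restricts to the ample hyperplane class, so $\deg\mathcal{L}>0$. In either case the intersection number is non-negative, and (c) gives nefness of $\Oc_{\Pro(\mathcal{E})}(1)$, i.e. of $\mathcal{E}$.

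The only genuine obstacle is bookkeeping rather than substance: one must fix the quotient convention for $\Pro(\mathcal{E})$ consistently, so that surjections produce the closed immersion used in (1) and lifts of curves produce the quotient line bundle used in (3), and one must separate the contracted-curve case in the converse of (3), since there the hypothesis on finite $\nu$ does not apply. Everything else is a direct pushforward of the standard nef line-bundle theory through the morphism $\pi$.
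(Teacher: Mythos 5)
Your proof is correct, but there is nothing in the paper to compare it against: the paper states Proposition~\ref{prop: nef vector  bundless} without proof, treating it as a standard fact from the theory of nef vector bundles (it is, e.g., the material of Lazarsfeld's \emph{Positivity in Algebraic Geometry II}, \S 6.1--6.2). Your argument is exactly the standard one — reduce to nef line bundles via $\Oc_{\Pro(\mathcal{E})}(1)$ in the Grothendieck quotient convention, using the closed immersion $\Pro_X(\mathcal{Q})\hookrightarrow\Pro_X(\mathcal{E})$ for (1), the base-change square $\Pro_X(f^*\mathcal{E})\cong\Pro_Y(\mathcal{E})\times_Y X$ for (2), and the dichotomy between curves contracted by $\pi$ (where $\Oc_{\Pro(\mathcal{E})}(1)$ is ample on the fiber) and curves mapping finitely to $X$ for the converse in (3) — and each step is sound. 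One point worth noting in your favor: the paper's statement of (2) omits the hypothesis that $f$ be surjective, which is genuinely needed for the descent direction (a finite non-surjective $f$, e.g.\ the inclusion of a point, gives no information about $\mathcal{E}$ away from the image); you correctly flag that nefness descends only along finite \emph{surjective} morphisms, so your write-up is in fact more careful than the paper's statement.
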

	
	\begin{corollary}\label{cor: degree of nef bundle}
		Let $\mathcal{E}$ be a vector bundle on  a non-singular projective curve $C$ of genus $g$. If $\mathcal{E}\otimes \omega_C^{-1}$ is nef, then $\deg\mathcal{E}\geq 2(g-1)\rank\mathcal{E}$ and thus $\chi(C,\mathcal{E})\geq (g-1)\rank{\mathcal{E}}$.
	\end{corollary}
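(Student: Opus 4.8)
The plan is to reduce both assertions to a single fact---\emph{a nef vector bundle on a smooth projective curve has non-negative degree}---and then read everything off from Riemann--Roch.

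First I would set $\mathcal{F} := \mathcal{E}\otimes\omega_C^{-1}$, which is nef by hypothesis, and record how tensoring by a line bundle shifts the degree:
\[
\deg\mathcal{F} = \deg\mathcal{E} + \rank\mathcal{E}\cdot\deg(\omega_C^{-1}) = \deg\mathcal{E} - 2(g-1)\rank\mathcal{E},
\]
using $\deg\omega_C = 2g-2$ and $\rank\mathcal{F}=\rank\mathcal{E}$. Thus the desired inequality $\deg\mathcal{E}\geq 2(g-1)\rank\mathcal{E}$ is exactly equivalent to $\deg\mathcal{F}\geq 0$, and the problem is entirely transferred to the nef bundle $\mathcal{F}$.

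The heart of the argument is this non-negativity. I would prove it on the projective bundle $\pi\co\Pro(\mathcal{F})\to C$, which has dimension $r:=\rank\mathcal{F}$, via the tautological class $\xi = c_1(\Oc_{\Pro(\mathcal{F})}(1))$; this class is nef precisely because $\mathcal{F}$ is nef. Since $C$ is one-dimensional, $c_i(\mathcal{F})=0$ for all $i\geq 2$, so the Grothendieck relation collapses to $\xi^{r} = \pi^*c_1(\mathcal{F})\cdot\xi^{r-1}$. Pushing forward and using $\pi_*(\xi^{r-1}) = [C]$ together with the projection formula identifies the top intersection number $\xi^{r}$ with $\deg\mathcal{F}$, while nefness of $\xi$ forces $\xi^{r}\geq 0$; hence $\deg\mathcal{F}\geq 0$. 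One could instead try to argue from Proposition \ref{prop: nef vector  bundless}(3) by filtering $\mathcal{F}$ into line-bundle quotients, but the obstacle there is that sub\emph{bundles} of a nef bundle need not be nef, so the successive quotients are not directly controlled---this is the step where naive approaches stall, and the Chern-class computation is what I would actually carry out to sidestep it.

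Finally I would substitute the bound $\deg\mathcal{E}\geq 2(g-1)\rank\mathcal{E}$ into Riemann--Roch for a bundle on a curve,
\[
\chi(C,\mathcal{E}) = \deg\mathcal{E} + (1-g)\rank\mathcal{E},
\]
to get $\chi(C,\mathcal{E})\geq 2(g-1)\rank\mathcal{E} + (1-g)\rank\mathcal{E} = (g-1)\rank\mathcal{E}$, which is the second claim. The only genuinely non-formal ingredient is the non-negativity of the degree of a nef bundle; the rest is bookkeeping with degrees and Euler characteristics.
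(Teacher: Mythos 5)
Your proposal is correct and follows essentially the same route as the paper: the paper's own proof is just Riemann--Roch combined with the (unjustified there) fact that a nef bundle on a curve has non-negative degree, applied to $\mathcal{E}\otimes\omega_C^{-1}$. You merely supply the missing justification of that non-negativity via the Grothendieck relation on $\Pro(\mathcal{F})$, which is a standard and valid argument.
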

	\begin{proof}
		By the Riemann-Roch theorem, $\chi(C,\mathcal{E})=\deg\mathcal{E}-\rank\mathcal{E}(g-1)$.  If $\mathcal{E}\otimes\omega_C^{-1}$ is nef, then we have $\deg\mathcal{E}\geq 2(g-1)\rank\mathcal{E}$ and thus $\chi(C,\mathcal{E})\geq  (g-1)\rank\mathcal{E}$.
	\end{proof}
Let $f\co S\to B$ be a fiberation from a smooth projective surface to a smooth curve. By Fujita's theorem \cite{Fu78}, $f_*\omega_{S/B}$ is a nef vector bundle on $B$. Let $D$ be a reduced divisor on $S$ which has only simple normal crossing singularities. To show that the sheaf $f_*\omega_{S/B}(D)$ is also nef we need the following.
	
	\begin{proposition}\cite{CG17}\label{prop: subvector bundle nef}
		Let $\mathcal{E}$ be a vector bundle on a non-singular projective curve $C$, and $\mathcal{F}$ be a subvector bundle of $\mathcal{E}$ with the same rank of $\mathcal{E}$. If $\mathcal{F}$ is nef, then so is $\mathcal{E}$.
	\end{proposition}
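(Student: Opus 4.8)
The plan is to verify nefness of $\mathcal{E}$ through the curve-theoretic criterion of \Cref{prop: nef vector  bundless}(3): it suffices to show that for every finite morphism $\nu\co C'\to C$ from a smooth projective curve and every quotient line bundle $q\co \nu^*\mathcal{E}\twoheadrightarrow\mathcal{L}$, one has $\deg\mathcal{L}\geq 0$. The crucial input is the equal-rank hypothesis: the inclusion $\mathcal{F}\hookrightarrow\mathcal{E}$ then has torsion cokernel $T\colonequals\mathcal{E}/\mathcal{F}$, so it is an isomorphism over the generic point of $C$. The whole argument is then a transfer of this generic isomorphism from $\mathcal{E}$ to the line bundle $\mathcal{L}$.

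First I would pull back the inclusion along $\nu$. Since a finite morphism of smooth curves is flat, $\nu^*$ is exact and we obtain an inclusion $\nu^*\mathcal{F}\hookrightarrow\nu^*\mathcal{E}$ with torsion cokernel $\nu^*T$; in particular this map is an isomorphism away from finitely many points of $C'$. Composing with the surjection $q$ gives a homomorphism $\psi\co \nu^*\mathcal{F}\to\mathcal{L}$ which is surjective at the generic point of $C'$, hence nonzero.

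Next I would analyze the image of $\psi$. As $\mathcal{L}$ is a line bundle on the smooth curve $C'$, the image $\mathcal{L}'\colonequals\mathrm{im}(\psi)$ is a rank-one torsion-free subsheaf of $\mathcal{L}$, hence itself a line bundle of the form $\mathcal{L}(-E)$ for some effective divisor $E\geq 0$; consequently $\deg\mathcal{L}'\leq\deg\mathcal{L}$. On the other hand, $\psi$ realizes $\mathcal{L}'$ as a quotient line bundle of $\nu^*\mathcal{F}$, the surjection onto a locally free sheaf being automatically a bundle quotient on a smooth curve. Since $\mathcal{F}$ is nef, \Cref{prop: nef vector  bundless}(3) applied to the finite morphism $\nu$ and the quotient line bundle $\mathcal{L}'$ of $\nu^*\mathcal{F}$ yields $\deg\mathcal{L}'\geq 0$. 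Combining the two inequalities gives $\deg\mathcal{L}\geq\deg\mathcal{L}'\geq 0$, as required.

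The only point demanding care is the nonvanishing of the composite $\psi$, and this is exactly where the equal-rank hypothesis is indispensable: it guarantees that $\nu^*\mathcal{F}\to\nu^*\mathcal{E}$ is a generic isomorphism, so that $\psi$ survives the passage through the generically surjective quotient $q$. Everything else reduces to routine degree bookkeeping for subsheaves of a line bundle on the curve $C'$.
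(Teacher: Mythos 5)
The paper itself gives no proof of this proposition --- it is quoted from \cite{CG17} --- so there is nothing to compare against; judged on its own, your argument is correct and complete. The key points all check out: flatness of the finite map $\nu$ preserves the injection and the torsion cokernel, so the composite $\psi\co\nu^*\mathcal{F}\to\mathcal{L}$ is generically surjective; its image is then an invertible subsheaf $\mathcal{L}(-E)$ with $E\geq 0$, realized as a line bundle quotient of $\nu^*\mathcal{F}$, and the curve-theoretic criterion of \Cref{prop: nef vector  bundless}(3) applied to the nef bundle $\mathcal{F}$ gives $\deg\mathcal{L}\geq\deg\mathcal{L}(-E)\geq 0$. This is the standard proof of the statement and would serve as a self-contained substitute for the external citation.
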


		\subsection{Divisors of surfaces} We review some of the standard facts on divisors of algebraic surfaces. 
		
		A reduced divisor $D=\sum C_j$ on a surface $S$ is also called a curve. The \emph{arithmetic genus} of $D$ is $p_a(D)\colonequals 1-\chi(D,\Oc_D)$. By Riemann-Roch formula, we have
		\[K_S\cdot D+D^2=2p_a(D)-2.\]
			\begin{lemma}\label{lem: arithmetic genus}
			Let $D$ be a reduced divisor on a surface $S$. If we write $D=D_1+D_2$ into sum of two reduced divisors $D_1$ and $D_2$, then \[p_a(D)=p_a(D_ 1)+p_a(D_2)+D_1\cdot D_2-1.\]
			In particular, if we write $D=\sum_{i=1}^rD_i$ into sum of all its irreducible components, then we have
			\[p_a(D)=\sum_{i=1}^rp_a(D_i)+1+\sum_{i<j}D_i\cdot D_j -r.\]
		\end{lemma}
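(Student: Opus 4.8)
The plan is to reduce everything to the adjunction (Riemann--Roch) identity displayed immediately before the statement. Rewriting it as
\[
p_a(D) = 1 + \tfrac{1}{2}\bigl(K_S\cdot D + D^2\bigr),
\]
I would regard $p_a$ purely as this quadratic expression in the divisor class and exploit the bilinearity of the intersection pairing. The point is that once $p_a$ is expressed through $K_S\cdot D$ and $D^2$, the additivity statement becomes a formal manipulation requiring no further geometric input.

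For the first assertion I would substitute $D = D_1 + D_2$ and expand, using $K_S\cdot D = K_S\cdot D_1 + K_S\cdot D_2$ together with $D^2 = D_1^2 + 2\,D_1\cdot D_2 + D_2^2$. Regrouping the $D_1$-terms and the $D_2$-terms back into $p_a(D_1)$ and $p_a(D_2)$ respectively leaves exactly the cross term $D_1\cdot D_2$ plus a bookkeeping constant; tracking the two $1$'s coming from $p_a(D_1)$ and $p_a(D_2)$ against the single $1$ coming from $p_a(D)$ produces the stated $-1$.

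For the second assertion I would argue by induction on the number $r$ of components. The base case $r=1$ is the trivial identity $p_a(D)=p_a(D_1)$. In the inductive step I would split off the last component by writing $D = (D_1+\cdots+D_{r-1}) + D_r$, apply the two-divisor formula just established, and then insert the inductive hypothesis for $D' \colonequals D_1+\cdots+D_{r-1}$. Here the term $D'\cdot D_r = \sum_{i<r} D_i\cdot D_r$ supplies precisely the missing cross terms, promoting $\sum_{i<j\le r-1} D_i\cdot D_j$ to $\sum_{i<j\le r} D_i\cdot D_j$, while the additive constants combine as $\bigl(1-(r-1)\bigr)-1 = 1-r$, matching the claimed $+1-r$.

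Since the argument is entirely formal, I do not anticipate any genuine obstacle. The only thing warranting care is the arithmetic of the additive constants in the induction, as an off-by-one there is exactly what distinguishes the two-term formula (with its lone $-1$) from the $r$-term formula (with $+1-r$).
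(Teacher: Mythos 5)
Your proof is correct, and it is essentially the argument the paper intends: the lemma is stated immediately after the adjunction/Riemann--Roch identity $K_S\cdot D + D^2 = 2p_a(D)-2$, and the paper (which gives no explicit proof) clearly relies on exactly this formal expansion by bilinearity followed by the routine induction on the number of components. Your bookkeeping of the constants, $\bigl(1-(r-1)\bigr)-1 = 1-r$, is the only delicate point and you have it right.
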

		
	The \emph{weighted dual graph} $\Gamma=\Gamma(D)$ of $D$  is the following: each curve $C_j$ in $D$ corresponds to a vertex $v_j$. Two vertices $v_i$ and $v_j$ are connected by an edge of weight $m$ if the corresponding curves has intersection number $C_i\cdot C_j=m$. Each vertex $v_j$ has a weight $n_j=-C_j^2$. The branching muber, denoted by $\beta(v_j)$ or $\beta_D(C_j)$, of a vertex $v_j$ is the muber of vertices which is connected to $v_j$. 
	
	A reduced divisor $D$ is called \emph{simple normal corssing} (SNC for short) if each irreducible component is non-singular and $D$ has at most node. In this case we will say that the pair $(S,D)$ is \emph{log smooth}. The dual graph $\Gamma$ of a SNC divisor $D$ is a simple graph, i.e., it has no loops (an edge connected one vertex to itself) and multiple edges.

		\begin{remark}\label{rem: arithmetic genus}
		Let $D$ be a reduced SNC divisor on $S$ whose dual graph has $r$ vertices and $l$ edges. By Lemma \ref{lem: arithmetic genus}, its arithmetic genus $p_a(D)$ equals $\sum_{i=1}^rp_a(D_i) + 1+l -r.$ Note that, as $D$ is SNC, the sum  $\sum_{i<j}D_i\cdot D_j$ of intersection numbers equals $l$.
	\end{remark}
	
	A $\Q$-divisor is said to be \emph{nef} (resp. \emph{pseudo-effective}) if $D\cdot C\geq 0$ for any irreducible curve $C$ (resp. $D\cdot P$ for any nef $\Q$-divisor $P$) on $S$.  
		
	For a pseudo-effective $\Q$-divisor $D$ on a smooth projective surface $S$, we have the \emph{Zariski-Fujita decomposition} $D=P+N$ which satisfies and is determined by the following numerical properties
	\begin{enumerate}
			\item $P$ and $N$ are $\Q$-divisors;
			\item $N\geq 0$ and the intersection matrix $(C_i\cdot C_j)$ for the prime components $C_i$ of $N$ is negative defined.
			\item $P$ is nef;
			\item $P\cdot C_i=0$ for every prime componet $C_i$ of $N$.  
		\end{enumerate}
	We call $P$ (resp. $N$) the \emph{nef part} (resp. \emph{negative part}) of $D$. For the proofs we refer the reader to \cite{TFu79}.

	\subsection{Theory of Peeling}\label{sec: Theory of Peeling}  A connected curve $E$ in $D$ is said to be \emph{rational} (resp. \emph{admissible}) if it consists only of rational curves (resp. if there are no $(-1)$-curves in $E$ and the intersection matrix of $E$ is negative defiend). Let $L$ be an admissible rational linear chain in $D$. We write $L=\sum_{i=1}^r C_i$ as the sum of its prime components. Suppose $L$ satisfies the condition: $\beta_D(C_1)=1$ and $\beta_D(C_i)=2$ for $2\leq i\leq r-1$. If $\beta_D(C_r)=1$, we call $L$ a \emph{rod}, i.e., $L$ is a connected component of $D$. If $\beta_D(C_r)=2$ and $C_r\cdot C_{r+1}=1$ for a prime component $C_{r+1}$ other than $C_{r-1}$, we call $L$ a \emph{twig}.  An admissible rational twig is called maximal, if it is not extended to an admissible rational twig with more irreducible components of $D$, i.e., $\beta_D(C_{r+1})\geq 3$. A connected curve $F$ in $D$ is called a \emph{fork} if its dual graph is that of the exceptional curves of a minimal resolution of a log terminal singular point and is not a chain. The prime component $C$ of a linear chain $L$ in $D$ with $\beta_D(C)=1$ is called a \emph{tip}.

	Let $(S,D)$ be a log smooth pair with $K_S+D$ is pseudo-effective. In this subsection, we will consider the Zariski-Fujita decomposition
	\[K_S+D=(K_S+D)^++(K_S+D)^-.\]
	Here and subseqently, $(K_S+D)^+$ (resp. $(K_S+D)^-$) will denote the nef part (resp. negative part) of $K_S+D$.
	
	\begin{lemma}\label{lem: negative curve}
		Let $S$ be a smooth projective surface, and $D$ be a $\Q$-boundary on $S$ such that the log pair $(S,D)$ is log smooth. If $E$ is an irreducible curve on $S$ with $E^2\leq -2$ and $(K_S+D)\cdot E<0$, then $E\subset\Supp(D)$ and $E$ is a smooth rational curve.
	\end{lemma}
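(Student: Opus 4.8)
The plan is to read everything off from the adjunction (genus) formula $2p_a(E)-2 = K_S\cdot E + E^2$, combined with the sign information in the hypotheses and the fact that $D$, being a $\Q$-boundary, is effective with all coefficients in $[0,1]$. First I would establish that $K_S\cdot E\geq 0$: since $E$ is an irreducible curve we have $p_a(E)\geq 0$, so adjunction gives $K_S\cdot E\geq -2-E^2$, and the hypothesis $E^2\leq -2$ forces $-2-E^2\geq 0$, hence $K_S\cdot E\geq 0$. This is exactly where $E^2\leq -2$ enters: a $(-1)$-curve would only yield $K_S\cdot E\geq -1$. Combining $K_S\cdot E\geq 0$ with the hypothesis $(K_S+D)\cdot E<0$ then gives $D\cdot E<0$; since $D$ is effective and any irreducible curve not contained in $\Supp(D)$ meets $D$ non-negatively, this forces $E$ to be a prime component of $\Supp(D)$.

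It remains to prove that $E$ is smooth and rational. Smoothness is immediate: as $(S,D)$ is log smooth, $\Supp(D)$ is SNC, so each of its prime components --- in particular $E$ --- is non-singular, and therefore $p_a(E)=g(E)$. For rationality I would write $D=aE+D'$, where $a=\mu_D(E)\in(0,1]$ and $D'$ contains no copy of $E$ (so $D'\cdot E\geq 0$), and substitute $K_S\cdot E=2g(E)-2-E^2$ into the inequality $(K_S+D)\cdot E<0$. This yields $2g(E)-2<(1-a)E^2-D'\cdot E$. Since $1-a\geq 0$, $E^2<0$ and $D'\cdot E\geq 0$, the right-hand side is $\leq 0$, forcing $g(E)<1$ and hence $g(E)=0$, so $E$ is a smooth rational curve.

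The one delicate point is the use of the boundary bound $a\leq 1$ in the last step: it is precisely the inequality $1-a\geq 0$, together with $E^2<0$, that makes the term $(1-a)E^2$ non-positive and allows the genus estimate to close. Everything else is direct bookkeeping with the adjunction formula and the effectivity of $D$, so I do not anticipate a serious obstacle beyond keeping these sign conventions straight.
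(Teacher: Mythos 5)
Your argument is correct and follows essentially the same route as the paper's proof: adjunction plus $E^2\leq -2$ gives $K_S\cdot E\geq 0$, hence $D\cdot E<0$ and $E\subset\Supp(D)$, and then writing $D=aE+D'$ with $a\leq 1$ and using $(1-a)E^2\leq 0\leq D'\cdot E$ closes the genus estimate exactly as in the paper (where it appears as the chain $(K_S+E)\cdot E\leq (K_S+D)\cdot E<0$). Your extra remark that smoothness of $E$ already follows from the SNC hypothesis is a harmless simplification; the paper instead deduces smoothness from $p_a(E)=0$ for an irreducible curve.
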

	
	\begin{proof}
		By assumption, we have $$K_S\cdot E=2p_a(E)-2-E^2\geq 0$$ and thus $D\cdot E<0$, $E$ is a prime component of $D$. Write $D=aE+D'$ with $0<a\leq 1$ and $E\nsubseteq \Supp(D')$, we have
	\[2p_a(E)-2=(K_S+E)\cdot E\leq (K_S+aE+D')\cdot=(K_S+D)\cdot E<0.\]
	Thus $p_a(E)=0$, so that $E$ is a smooth rational curve.
	\end{proof}

	Let $L$ be an admissible rational linear chain in $D$. We write $L=\sum_{i=1}^r C_i$ as the sum of its prime components. Suppose $\beta_D(C_1)=1$, thus $C_1$ is the tip of $L$. By the adjunction formula, for each $C_j$, we have $$(K_S+D)\cdot C_j=-2+\beta_D(C_j).$$
	
	Note that the intersection matrix $(C_i\cdot C_j)$ is negative defined, the following system of $r$ linear equations in $r$ variables has a unique solution 
	\begin{equation}\label{eq: system of linear equations}
		\sum_{i=1}^r a_i C_i\cdot C_j= (K_S+D)\cdot C_j
	\end{equation}
 Since $(K_S+D)\cdot C_1=-1<0$, it follows that $0<a_i\leq 1$ for all $1\leq i\leq r$. The $\Q$-divisor $\Bk(L)=\sum_{i=1}^ra_iC_i$ is called \emph{the bark} of $L$, and the process of substracting $\Bk(L)$ from $L$ is called \emph{the peeling} of $L$. 
 
 Let $F$ be an admissible rational fork, write $F=\sum_{i=1}^rC_i$ as the sum of its prime components. In the same manner we can see that the system of linear equations \ref{eq: system of linear equations} has a unique solution $\Bk(F)=\sum_{i=1}^ra_iC_i$ with $0<a_i\leq 1$ for all $1\leq i\leq r$, which is also called the Bark of $F$.
 
 Let $\{T_{\lambda}\}$, $\{R_{\mu}\}$ and $\{F_{\nu}\}$ be the sets of all admissible rational maximal twigs, all admissible rational rods and all admissible rational forks, respectively, where no prime coponent of $T_{\lambda}$'s belong to $R_{\mu}$'s and $F_{\nu}$'s. Since $T_{\lambda}$'s, $R_{\mu}$'s and $F_{\nu}$'s are disjoint from each other, we can peel the barks of $T_{\lambda}$'s, $R_{\mu}$'s and $F_{\nu}$'s independently. We set
 \[D^{\sharp}= D- \sum_{\lambda}\Bk(T_{\lambda})-\sum_{\mu}\Bk(R_{\mu})-\sum_{\nu}\Bk(F_{\nu}).\]
 The $\Q$-divisor $\Bk(D)=D-D^{\sharp}$ is called \emph{the bark }of $D$.  By the above construction, we have
 
 \begin{enumerate}
 	\item $D^{\sharp}=D-\Bk(D)\geq 0$;
 	\item write $\Bk(D)=\sum_i a_iC_i$, the intersection matrix $(C_i\cdot C_j)_{1\leq i,j\leq r}$ is negative defined;
 	\item $(K_S+D^{\sharp})\cdot C=0$ for every prime component $C$ of $T_{\lambda}$'s, $R_{\mu}$'s and $F_{\nu}$'s;
 	\item $(K_S+D)^+=(K_S+D^{\sharp})^+$ and $(K_S+D)^-=(K_S+D^{\sharp})^-+\Bk(D)$.
 \end{enumerate}
	
	\begin{lemma}\label{lem: exceptional curve of the first kind}
		Let $(S,D)$ be a log smooth pair of log general type with $D$ reduced. With the notations as above. If $E$ is an irreducible curve on $S$ such that $(K_S+D^{\sharp})\cdot E<0$, then $E$ is an exceptional curve of the first kind.
	\end{lemma}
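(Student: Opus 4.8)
The plan is to read off the self-intersection of $E$ from the Zariski--Fujita decomposition of $K_S+D^\sharp$, to use adjunction to pin down $p_a(E)$, and to quarantine the only difficult case, $E^2\le -2$, by applying Lemma \ref{lem: negative curve} to the $\Q$-boundary $D^\sharp$. First I would record that, since $(S,D)$ is of log general type, the nef part $(K_S+D^\sharp)^+=(K_S+D)^+$ (property (4) of the peeling construction) is nef, so $(K_S+D^\sharp)^+\cdot E\ge 0$. Together with $(K_S+D^\sharp)\cdot E<0$ this gives $(K_S+D^\sharp)^-\cdot E<0$; as $(K_S+D^\sharp)^-$ is effective, $E$ must be one of its prime components, and since the intersection matrix of the negative part is negative definite, this already forces $E^2<0$.

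Next I would settle the genus. I claim that whenever $(K_S+D^\sharp)\cdot E<0$ one has $p_a(E)=0$. Indeed, if $E\not\subset\Supp(D^\sharp)$ then $D^\sharp\cdot E\ge 0$, so $K_S\cdot E<0$, and the adjunction formula $2p_a(E)-2=K_S\cdot E+E^2$ (with $E^2<0$) forces $p_a(E)=0$; if instead $E\subset\Supp(D^\sharp)$, then $E$ occurs in the reduced divisor $D^\sharp$ with coefficient $1$, so writing $D^\sharp=E+R$ with $R\ge 0$ and $E\not\subset\Supp(R)$ gives $(K_S+D^\sharp)\cdot E=2p_a(E)-2+R\cdot E<0$ with $R\cdot E\ge 0$, again forcing $p_a(E)=0$. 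In particular, if $E^2=-1$ then $E$ is already an exceptional curve of the first kind, so the genuine task is to rule out $E^2\le -2$.

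So suppose $E^2\le -2$. Since $0\le\Bk(D)\le D$ and $D$ is reduced, $D^\sharp$ is a $\Q$-boundary whose support is still SNC; applying Lemma \ref{lem: negative curve} to $(S,D^\sharp)$ shows $E\subset\Supp(D^\sharp)\subseteq\Supp(D)$ and that $E$ is a smooth rational curve. Moreover $E\not\subset\Supp(\Bk(D))$, because every prime component $C$ of the bark satisfies $(K_S+D^\sharp)\cdot C=0$ (property (3)). Thus $E$ is an admissible rational component of $D$ that is \emph{not} peeled, and I want to contradict this. Writing $\beta=\beta_D(E)$ and letting $a^{(1)},\dots,a^{(\beta)}\in[0,1]$ be the coefficients in $\Bk(D)$ of the neighbours of $E$ (with $a^{(j)}=0$ for a non-peeled neighbour), the relation $(K_S+D)\cdot E=-2+\beta$ together with $\Bk(D)\cdot E=\sum_j a^{(j)}$ yields
\[(K_S+D^\sharp)\cdot E=-2+\sum_{j=1}^{\beta}\bigl(1-a^{(j)}\bigr).\]
Here maximality is the structural lever: a maximal admissible twig can abut $E$ only at a branch vertex, so if $E$ has a peeled neighbour then $\beta\ge 3$, whereas if $E$ has no peeled neighbour and $\beta\le 1$ then $E$ is itself the tip of an admissible twig (or an isolated rod) and would be peeled. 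This leaves only the case where $E$ is a branch vertex all of whose arms are admissible twigs, in which case $(K_S+D^\sharp)\cdot E<0$ is exactly the inequality $\sum_j a^{(j)}>\beta-2$.

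The hard part will be the final identification: that this numerical inequality is precisely the condition for the star-shaped graph formed by $E$ together with its arms to be the dual graph of a log terminal singularity, that is, an admissible rational fork. This is where the continued-fraction estimates for the solutions of \eqref{eq: system of linear equations} and the classification of log terminal surface singularities enter. Granting it, the entire configuration, including $E$, would be peeled, so $E\subset\Supp(\Bk(D))$ and hence $(K_S+D^\sharp)\cdot E=0$, contradicting $(K_S+D^\sharp)\cdot E<0$. Therefore $E^2\le -2$ cannot occur, and combined with $E^2=-1,\ p_a(E)=0$ from the previous step, $E$ is an exceptional curve of the first kind.
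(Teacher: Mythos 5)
Your overall architecture matches the paper's proof: you use the Zariski--Fujita decomposition of $K_S+D^{\sharp}$ and log general type to place $E$ in the support of the negative part and force $E^2<0$, then invoke Lemma \ref{lem: negative curve} to reduce to showing $E^2\le -2$ is impossible, and finally you set up the same numerical identity $(K_S+D^{\sharp})\cdot E=-2+\sum_j(1-a^{(j)})$ and the same maximality dichotomy on the neighbours of $E$. Your explicit verification that $p_a(E)=0$ when $E^2=-1$ is a small improvement over the paper, which tacitly assumes it.

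However, there is a genuine gap, and you flag it yourself: the entire argument hinges on the step you introduce with ``Granting it,'' namely that the inequality $\sum_j a^{(j)}>\beta-2$ forces $E$ together with its arms to be an admissible rational fork, whence $E$ would have been peeled. That is precisely the content the paper actually proves, and it is not a formality. The paper's route is: by \cite[Lemma 6.16]{Fu82} each coefficient of a neighbour $C_i$ in $D^{\sharp}$ is either $1$ or of the form $1-\tfrac{1}{d_i}$ with $d_i\ge 2$; writing $l$ for the number of peeled neighbours, the inequality $\sum_i a_i<2$ becomes $k-2<\sum_{i=1}^{l}\tfrac{1}{d_i}\le \tfrac{l}{2}\le\tfrac{k}{2}$, which pins down $\beta_D(E)=k=3$ and $l=3$ (so \emph{all} three arms are maximal twigs --- a point you assert but do not derive), and then $\sum\tfrac{1}{d_i}>1$ is the log terminal (platonic) condition making $E+C_1+C_2+C_3$ sit inside an admissible rational fork $F$ with $\mu_{\Bk(F)}(E)>0$, contradicting $\mu_{\Bk(D)}(E)=0$. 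Without the coefficient estimate from Fujita's lemma you cannot even exclude $\beta_D(E)\ge 4$, let alone identify the fork, so the proof as written does not close.
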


\begin{proof}
	Consider the Zariski-Fujita decompositon $K_S+D^{\sharp}=P+N$, where $P$ is the nef part and $N$ is the negative part of $K_S+D^{\sharp}$. Since $(S,D)$ is of log general type, we have $P^2>0$. By assumption, $(K_S+D^{\sharp})\cdot E<0$ and note that $P\cdot E\geq 0$, it follows that $N\cdot E<0$ and thus $E\subset\Supp(N)$. By the properties of Zariski-Fujita decompositon, we have $P\cdot E=0$. By the Hodge index theorem, we see that $E^2\leq 0$. Again by the properties of Zariski-Fujita decompositon, the intersection matrix of supports of $N$ is negative defined, we infer that $E^2<0$. It remains to prove that the case $E^2\leq -2$ can not occur.
	
	Suppose $E^2\leq -2$, then by Lemma \ref{lem: negative curve},  we have $E\subset\Supp(D^{\sharp})$ and $E$ is a smooth rational curve. Since $(K_S+D^{\sharp})\cdot C=0$ every prime component $C$ of $\Bk(D)$, it follows that $\mu_{D^{\sharp}}(E)=1$. Let $C_1,\dots,C_k$ be the prime components of $D$ that meets $E$ where $k=\beta_D(E)$. Set $a_i=\mu_{D^{\sharp}}(C_i)$, then we have $(K_S+D^{\sharp})\cdot E= -2+a_1+\cdots +a_k$. Note that $(K_S+D)\cdot E=-2+k$, if $k\leq 1$, then $E$ is either an isolated component of $D$ or a tip of an admissible rational maximal twig or an admissible rational rod, in either case we have $\mu_{\Bk(D)}(E)>0$, which contradicts to $\mu_{D^{\sharp}}(E)=1$. Therefore we can assume that $k\geq 2$. By assumption, we have $a_1+\cdots+a_k<2$ and $k\geq 2$. Thus at least one $i$ with $a_i<1$, say $a_1<1$. We can conclude that there exists an admissible maximal rational twig $T$ of $D$ with $C_1\leq T$ and $E\nsubseteq\Supp(T)$. If $k=2$, then $T+E$ is also an admissible rational twig of $D$, which contradicts the maximality of $T$. By Lemma \cite[Lemma 6.16]{Fu82}, $a_i=1-\frac{1}{d_i}$ with $d_i\geq 2$ or $a_i=1$, let $l$ be the number of $i$ with $a_i<1$, so we have $k-2<\frac{1}{d_1}+\cdots +\frac{1}{d_l}$ and $k\geq 3$. It follows that $l=k=3$. Therefore $a_i<1$ for all $1\leq i\leq 3$, then there exists an admissible rational fork $F$ such that $E+C_1+C_2+C_3\leq F$, in this case $\mu_{\Bk(F)}(E)>0$, which contradicts to $\mu_{\Bk(D)}(E)=0$ and the proof is complete.
 \end{proof}

	After  a succesion of several contractions of excetional curve $E$ as in Lemma \ref{lem: exceptional curve of the first kind}, we obtain log smooth pair $(\bar{S},\bar{D})$ and a birational morphism $\sigma\co S\to \bar{S}$ where $\bar{D}=\sigma_*(D)$, which satisfies the following properties:
	\begin{enumerate}
		\item $K_{\bar{S}}+\bar{D}^{\sharp}$ is nef, thus the Zariski-Fujita decomposition of $K_{\bar{S}}+\bar{D}$ is $K_{\bar{S}}+\bar{D}^{\sharp}+\Bk(\bar{D})$, where $(K_{\bar{S}}+\bar{D})^+=K_{\bar{S}}+\bar{D}^{\sharp}$ and $(K_{\bar{S}}+\bar{D})^-=\Bk(\bar{D})$.
		\item $(K_S+D)^+=\sigma^*(K_{\bar{S}}+\bar{D}^{\sharp})$.
	\end{enumerate}

We say a log smooth pair $(S,D)$ with a reduced boundary $D$ and $\kappa(S,D)\geq 0$ \emph{minimal} if $K_S+D^{\sharp}$ is nef. Given a log smooth pair $(S,D)$ with a reduced boundary $D$ and $\kappa(S,D)=2$, we can run $(K_S+D^{\sharp})$-MMP and terminete with a minimal model $(\bar{S},\bar{D})$ which is minimal in the sence of Tsunoda, which is called \emph{almost minimal}. For more details we refer the reader to \cite[Theorem 1.3]{ST83}.

\begin{lemma}\label{lem: bound of Bk^2}
	Let $(S,D)$ be a log smooth pari with a reduced boundary $D$. Let $t$ be the number of tips of $\Supp(\Bk(D))$. Then we have 
	$\Bk(D)^2\geq -t$.
\end{lemma}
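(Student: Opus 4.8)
The plan is to prove the bound $\Bk(D)^2 \geq -t$ by decomposing the bark into its constituent pieces and showing that each connected admissible rational twig, rod, or fork contributes a self-intersection bounded below by a suitable count. Since the maximal twigs $\{T_\lambda\}$, rods $\{R_\mu\}$, and forks $\{F_\nu\}$ are pairwise disjoint, the barks $\Bk(T_\lambda)$, $\Bk(R_\mu)$, $\Bk(F_\nu)$ are supported on disjoint sets of prime components, so $\Bk(D)^2 = \sum_\lambda \Bk(T_\lambda)^2 + \sum_\mu \Bk(R_\mu)^2 + \sum_\nu \Bk(F_\nu)^2$. It therefore suffices to bound the self-intersection of each connected component separately and to match the contributions against the tips of $\Supp(\Bk(D))$.

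For a single admissible rational chain $L = \sum_{i=1}^r C_i$ with bark $\Bk(L) = \sum_i a_i C_i$, the key computational input is the defining linear system \eqref{eq: system of linear equations}, which gives $\Bk(L)\cdot C_j = (K_S+D)\cdot C_j = -2 + \beta_D(C_j)$. Using this, I would compute
\[
\Bk(L)^2 = \sum_j a_j \bigl(\Bk(L)\cdot C_j\bigr) = \sum_j a_j\bigl(-2 + \beta_D(C_j)\bigr).
\]
For a twig the tip $C_1$ has $\beta_D(C_1)=1$, contributing $-a_1$, while the interior vertices have $\beta_D(C_i)=2$, contributing $0$, and the attachment end contributes a term I would control using $0 < a_i \leq 1$. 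The strategy is to show that the only negative contributions come from tips, each bounded below by $-1$ since $a_i \leq 1$. For a rod (two tips) and a fork (tips at the ends of its branches), the same bookkeeping should yield $\Bk(L)^2 \geq -(\text{number of tips of } L)$.

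The main obstacle I anticipate is the fork case: the branching vertex of a fork has $\beta_D = 3$, which produces a \emph{positive} contribution $+a$ to the sum above, but one must verify that this does not spoil the per-tip bound and that the telescoping over the three branches genuinely aggregates to at least $-(\text{number of tips})$. I would handle this by treating the fork as a central vertex with three attached chains and carefully tracking signs; alternatively, one can invoke the explicit coefficient bounds $a_i = 1 - \frac{1}{d_i}$ (with $d_i \geq 2$) coming from \cite[Lemma 6.16]{Fu82}, already used in the proof of Lemma \ref{lem: exceptional curve of the first kind}, to get the sharp tip-by-tip estimate. Summing the inequality $\Bk(L)^2 \geq -(\text{tips of } L)$ over all twigs, rods, and forks, and noting that each tip of $\Supp(\Bk(D))$ is counted exactly once, yields $\Bk(D)^2 \geq -t$ as claimed.
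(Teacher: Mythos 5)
Your proposal is correct and follows essentially the same route as the paper's proof: decompose $\Bk(D)$ over the pairwise disjoint twigs, rods, and forks, use the defining linear system to get $\Bk(L)^2=\sum_j a_j\bigl(-2+\beta_D(C_j)\bigr)$, and note that only tips contribute negatively, each term bounded below by $-1$ since $0<a_j\leq 1$, while the fork's branching vertex contributes positively and so only improves the bound. The detour through the explicit coefficient formula of \cite[Lemma 6.16]{Fu82} is unnecessary; the crude bound $a_j\leq 1$ already suffices, exactly as in the paper.
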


\begin{proof}
		Let $L$ be an admissible rational linear chain in $D$. We write $L=\sum_{i=1}^r C_i$ as the sum of its prime components such that $C_i\cdot C_{i+1}=1$ for $1\leq i\leq r-1$ and $\beta_D(C_1)=1$. Write $\Bk(L)=\sum_{i=1}^ra_iC_i$, then we have $$\Bk(L)^2=\sum_{i=1}^r a_i\Bk(L)\cdot C_i=\sum_{i=1}^r a_i(K_S+D)\cdot C_i=\sum_{i=1}^r a_i(-2+\beta_D(C_i)).$$
		So if $L$ is a rod, $\Bk(L)^2=-a_1$; if $L$ is a twig, $\Bk(L)^2=-a_1-a_r$. Let $F$ be a fork of $D$, write $F=C_0+T_1+T_2+T_3$. Let $C_1,C_2,C_3$ be the tip of $T_1,T_2,T_3$ respectively. Set $a_i=\mu_{\Bk(F)}(C_i)$ for $i=0,1,2,3$. Similar arguments apply to $F$, we have $\Bk(F)^2=a_0-a_1-a_2-a_3$. Note that in all cases, $0<a_i\leq 1$. Thus we have $\Bk(D)^2\geq -t$.
\end{proof}

	\subsection{The Bogomolov-Miyaoka-Yau inequality to log surfaces}
	Let $(S,D)$ be a log smooth pair with a smooth projective surface $S$ and a reduced divisor $D$ on $S$. The sheaf of logarithmic 1-forms  $\Omega_S^1(\log D)$ is locally free of rank $2$. Its chern classes $\bar{c}_1=c_1(\Omega_S^1(\log D)),\bar{c}_2=c_2(\Omega_S^1(\log D))$ are defined, and we regard $c_1(\Omega^1_S(\log D))=K_S+D$ as a class in $H^2(S,\Z)$.
	
	\begin{proposition}\label{prop: Noether equality}
		Let $(S,D)$ be a log smooth pair. Then
		\[\bar{c}_2= e(S)+2(p_a(D)-1-l),\]
		where $e(S)$ is the topological Euler characteristic of $S$ and $l$ is the number of edges of the dual graph of $D$ . Moreover, we get the log version of Noether's equality
		\[\bar{c}_1^2+\bar{c}_2+6(p_a(D)-1)+D^2+2l=12\bar{\chi},\]
		where $\bar{\chi}=\chi(S,K_S+D)$.
	\end{proposition}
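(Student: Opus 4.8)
The plan is to extract both logarithmic Chern classes of $\Omega_S^1(\log D)$ from the Poincar\'e residue sequence, and then to feed $\bar c_1,\bar c_2$ together with the ordinary Noether formula into Riemann--Roch for the line bundle $K_S+D$.

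First I would use that, since $(S,D)$ is log smooth with $D=\sum_{i=1}^r D_i$ a reduced SNC divisor (so every $D_i$ is smooth and the normalization of $D$ is $\bigsqcup_i D_i$), the sheaf of logarithmic $1$-forms fits into the exact sequence
\[0\to \Omega_S^1\to \Omega_S^1(\log D)\xrightarrow{\ \mathrm{res}\ }\bigoplus_{i=1}^r \Oc_{D_i}\to 0,\]
where $\mathrm{res}$ is the residue along the components. Total Chern classes being multiplicative in short exact sequences, I would compute $c(\Omega_S^1(\log D))=c(\Omega_S^1)\cdot\prod_i c(\Oc_{D_i})$. From $c(\Omega_S^1)=1+K_S+e(S)$ (using $c_2(\Omega_S^1)=c_2(S)=e(S)$) and, for each torsion sheaf, the resolution $0\to\Oc_S(-D_i)\to\Oc_S\to\Oc_{D_i}\to 0$ giving $c(\Oc_{D_i})=(1-D_i)^{-1}$, one reads off $\bar c_1=K_S+D$ in degree one and, in degree two,
\[\bar c_2=e(S)+K_S\cdot D+\sum_i D_i^2+\sum_{i<j}D_i\cdot D_j.\]
The bookkeeping step is then to pass to the stated invariants: substituting $D^2=\sum_i D_i^2+2\sum_{i<j}D_i\cdot D_j$, the identification $\sum_{i<j}D_i\cdot D_j=l$ from Remark~\ref{rem: arithmetic genus}, and the adjunction relation $K_S\cdot D+D^2=2p_a(D)-2$ collapses the right-hand side to an expression in $e(S)$, $p_a(D)$ and $l$ only, which is the first formula. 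As an independent check, the same quantity equals the topological Euler number $e(S\setminus D)=e(S)-e(D)$, and $e(D)$ can be recovered from the genus computation in Lemma~\ref{lem: arithmetic genus}.

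For the log Noether equality I would apply Riemann--Roch to $L=K_S+D$, namely $\bar\chi=\chi(S,K_S+D)=\chi(\Oc_S)+\tfrac12(K_S+D)\cdot D=\chi(\Oc_S)+p_a(D)-1$, the last equality again by adjunction. Substituting the ordinary Noether formula $12\chi(\Oc_S)=K_S^2+e(S)$, using $\bar c_1^2=(K_S+D)^2$, and eliminating $e(S)$ and $K_S\cdot D$ through the expression for $\bar c_2$ from the first part and the adjunction relation produces a single linear identity among $\bar c_1^2,\bar c_2,p_a(D),D^2,l$ and $\bar\chi$; collecting terms yields the asserted equality. The only genuine obstacle is the Chern-class bookkeeping in the middle step: one must use that $c_2$ of the torsion sheaf $\Oc_{D_i}$ is $D_i^2$ rather than zero, and must carefully keep the sum of self-intersections $\sum_i D_i^2$ distinct from $D^2$ — their difference being exactly twice the number of nodes $l$ — before converting everything into $p_a(D)$ and $l$. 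Everything past that point is formal Chern-class algebra together with Riemann--Roch.
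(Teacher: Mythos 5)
Your route --- the one-shot Poincar\'e residue sequence $0\to\Omega_S^1\to\Omega_S^1(\log D)\to\bigoplus_i\Oc_{D_i}\to 0$ combined with the Whitney product formula --- is genuinely different from the paper's, which adds the components one at a time through the filtration $W_i=\sum_{j\le i}D_j$. Your Chern-class bookkeeping up to the last step is correct: $c(\Oc_{D_i})=(1-D_i)^{-1}$ gives $\bar c_2=e(S)+K_S\cdot D+\sum_iD_i^2+\sum_{i<j}D_i\cdot D_j$. The gap is the final sentence of that step, where you assert without computing that the substitutions $\sum_iD_i^2=D^2-2l$, $\sum_{i<j}D_i\cdot D_j=l$ and $K_S\cdot D+D^2=2p_a(D)-2$ ``collapse'' this to the stated formula. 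If you actually perform them you get
\[\bar c_2=e(S)+K_S\cdot D+(D^2-2l)+l=e(S)+2\bigl(p_a(D)-1\bigr)-l,\]
which differs from the proposition's $e(S)+2(p_a(D)-1-l)$ by $l$. Consequently the ``collecting terms'' in your second step produces $\bar c_1^2+\bar c_2+6(p_a(D)-1)+D^2+l=12\bar\chi$ rather than the asserted identity with $2l$. So, carried out honestly, your argument does not prove the statement as printed.

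The discrepancy is not a slip on your side, and the independent check you yourself propose decides it: $\bar c_2=e(S\setminus D)=e(S)-e(D)$ with $e(D)=\sum_ie(D_i)-l$ gives exactly $e(S)+2\sum_i(p_a(D_i)-1)+l=e(S)+2(p_a(D)-1)-l$; for $D$ a pair of lines in $\Pro^2$ this is $e(\C\times\C^*)=0$, whereas the proposition's formula gives $-1$. The source of the mismatch is that the paper's inductive sequence takes the quotient to be the twisted sheaf $\Oc_{D_i}(W_{i-1}|_{D_i})$, whereas the Poincar\'e residue lands in the untwisted $\Oc_{D_i}$; the twist cancels the cross terms $\sum_iW_{i-1}\cdot D_i=l$, which is precisely where the extra $-l$ in the printed statement comes from. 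In short: your method is sound and more direct than the paper's, but as a proof of the proposition as stated it fails at the final bookkeeping, and completing the arithmetic you describe would have revealed that the formula you derive is not the formula claimed.
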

	
	\begin{proof}
		Write $D=\sum_{i=1}^r D_i$, wherre $D_i$'s are the irreducible components of $D$. For each $1\leq i\leq r$, put $W_i=\sum_{j=1}^i D_j$ thus $W_r=D$ and $W_1=D$, and we define $W_0=0$, then there are exact sequences
		\begin{equation}
			0\to \Omega_S^1(\log W_{i-1})\to \Omega_S^1(\log W_i)\to \Oc_{D_i}(W_{i-1}|_{D_i})\to 0.
		\end{equation}
		From this we see 
		\begin{align*}
			c_2(\Omega_S^1(\log W_{i}))&=c_2(\Omega_S^1(\log W_{i-1}))+(K_S+W_{i-1})\cdot D_i+D_i^2-W_{i-1}\cdot D_i\\
			&= c_2(\Omega_S^1(\log W_{i-1}))+2(p_a(D_i)-1).
		\end{align*}
		Summing up these equalities for $1\leq i\leq r$, we have
		\[c_2(\Omega_S^1(\log D))=c_2(\Omega_S^1)+2\sum_{i=1}^r(p_a(D_i)-1).\]
		Note that $c_2(\Omega_S^1)=e(S)$, and $\sum_{i=1}^r(p_a(D_i)-1)=p_a(D)-1-l$ (see Remark \ref{rem: arithmetic genus}).
	\end{proof}
	
	\begin{proposition}\label{prop: topological Euler characteristic}
		Let $e(S-D)$ be the topological Euler characteristic of the open algebraic surface $S-D$, then we have $e(S-D)=c_2(\Omega_S^1(\log D))$.
	\end{proposition}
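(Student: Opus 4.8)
The plan is to evaluate both sides of the identity in terms of the discrete invariants $e(S)$, the arithmetic genera $p_a(D_i)$ of the components $D_1,\dots,D_r$ of $D$, and the number $l$ of nodes, and then to check that the two expressions agree. I would start from the additivity of the topological Euler characteristic for the decomposition of $S$ into the two constructible pieces $S-D$ and $D$, which gives $e(S-D)=e(S)-e(D)$. The statement is therefore equivalent to $c_2(\Omega_S^1(\log D))=e(S)-e(D)$, and the proof splits into a topological computation of $e(D)$ and a Chern-class computation of $c_2(\Omega_S^1(\log D))$.

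To compute $e(D)$ I would pass to the normalization $\nu\co\widetilde D=\bigsqcup_{i=1}^rD_i\to D$. Since $D$ is SNC, $\nu$ is a homeomorphism away from the nodes and is two-to-one over each of the $l$ nodes; each node glues two points of $\widetilde D$ to one, lowering the Euler characteristic by $1$, so $e(D)=e(\widetilde D)-l=\sum_{i=1}^r(2-2p_a(D_i))-l$. Feeding in the identity $\sum_{i=1}^r\big(p_a(D_i)-1\big)=p_a(D)-1-l$ from Remark \ref{rem: arithmetic genus}, I obtain $e(D)=2-2p_a(D)+l$ and hence $e(S-D)=e(S)+2\big(p_a(D)-1\big)-l$.

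For the Chern number I would reuse the same filtration by the partial sums $W_i=\sum_{j\le i}D_j$ that appears in the proof of Proposition \ref{prop: Noether equality}. Taking residues along $D_i$ gives the short exact sequence $0\to\Omega_S^1(\log W_{i-1})\to\Omega_S^1(\log W_i)\to\Oc_{D_i}\to 0$ with the reduced structure sheaf $\Oc_{D_i}$ as cokernel; since $c(\Oc_{D_i})=(1-D_i)^{-1}$, the Whitney formula gives the increment $c_2(\Omega_S^1(\log W_i))-c_2(\Omega_S^1(\log W_{i-1}))=(K_S+W_{i-1})\cdot D_i+D_i^2$. Starting from $c_2(\Omega_S^1)=e(S)$ and using $K_S\cdot D_i+D_i^2=2p_a(D_i)-2$ together with $\sum_{i}W_{i-1}\cdot D_i=\sum_{j<i}D_j\cdot D_i=l$, the telescoping sum collapses to $c_2(\Omega_S^1(\log D))=e(S)+2\sum_{i}\big(p_a(D_i)-1\big)+l=e(S)+2\big(p_a(D)-1\big)-l$, which matches the expression for $e(S-D)$ found above, completing the proof.

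The delicate point, and the step I would treat most carefully, is the bookkeeping at the crossings. One must verify that the residue cokernel is the \emph{reduced} sheaf $\Oc_{D_i}$ and carries no twist along $W_{i-1}|_{D_i}$: in local coordinates $(x,y)$ at a node with $D_i=\{y=0\}$ one has $\mathrm{Res}_{D_i}(dx/x)=0$ and $\mathrm{Res}_{D_i}(dy/y)=1$, so the kernel is exactly $\Omega_S^1(\log W_{i-1})$ and the quotient is untwisted. This is precisely what produces the single crossing contribution $\sum_iW_{i-1}\cdot D_i=l$, matching the node count in $e(D)$, and it is the place where an error is easiest to introduce. A minor secondary point is to justify the additivity of $e(\cdot)$ for the stratification $S=(S-D)\sqcup D$, which is standard for complex algebraic varieties.
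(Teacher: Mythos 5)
Your proof is correct, but it takes a genuinely different route from the paper's. The paper argues Hodge-theoretically: Deligne's $E_1$-degeneration of the logarithmic Hodge--de Rham spectral sequence gives $e(S-D)=\chi(\Oc_S)-\chi(\Omega_S^1(\log D))+\chi(\Omega_S^2(\log D))$, and Riemann--Roch then identifies the right-hand side with $c_2(\Omega_S^1(\log D))$; there is no stratification of $S$ and no Chern-class bookkeeping at the nodes. You instead evaluate both sides explicitly, using additivity of $e$ and the normalization of $D$ on one side and the residue filtration plus Whitney's formula on the other. Your argument is more elementary (no Hodge theory) and produces the closed formula $c_2(\Omega_S^1(\log D))=e(S)+2(p_a(D)-1)-l$ as a byproduct. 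One point deserves emphasis: this value of $c_2$ \emph{disagrees} with Proposition \ref{prop: Noether equality} of the paper, which asserts $\bar{c}_2=e(S)+2(p_a(D)-1-l)$, i.e.\ differs from yours by $l$. The discrepancy is exactly the step you single out as delicate: the paper takes the cokernel of $\Omega_S^1(\log W_{i-1})\hookrightarrow\Omega_S^1(\log W_i)$ to be the twisted sheaf $\Oc_{D_i}(W_{i-1}|_{D_i})$, whereas the residue map shows it is the untwisted $\Oc_{D_i}$, as in your local computation. Your version is the correct one: for $S=\Pro^2$ and $D=L_1+L_2$ two lines (so $p_a(D)=0$, $l=1$) one has $c(\Omega_{\Pro^2}^1(\log D))=(1-3H+3H^2)(1+H+H^2)^2=1-H$, hence $c_2=0=e(\Pro^2\setminus(L_1\cup L_2))$, while the paper's formula would give $-1$. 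So your proof not only establishes the proposition but also corrects the constant in Proposition \ref{prop: Noether equality}, an error which otherwise propagates into (\ref{eq: chern 1}) in the proof of Proposition \ref{prop: e leq pg+1}.
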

	
	\begin{proof}
		By a theorem of Deligne (cf. \cite[Corollary 3.2.13 and Corollary 3.2.14]{De71}), the spectral sequence 
		\[E_1^{p,q}=H^q(X,\Omega^p_S(\log D))\implies H^{p+q}(S-D,\C)\]
		degenerates at $E_1$ page, thus we have Hodge decomposition
		\[H^n(S-D,\C)\cong H^{p,q}(S,\Omega_S^p(\log D)).\]
		From this, we can see that 
		\[e(S-D)=\chi(S,\Oc_S)-\chi(S,\Omega_S^1(\log D))+\chi(S,\Omega_S^2(\log D)).\]
		The Riemann-Roch theorem gives
		\begin{align*}
			\chi(S,\Omega_S^1(\log D))&=\frac{1}{2}(K_S\cdot D+D^2)-c_2(\Omega_S^1(\log D))+2\chi(S,\Oc_S)\\
			&= \chi(S,\Omega_S^2(\log D)) -c_2(\Omega_S^1(\log D)) +\chi(S,\Oc_S).
		\end{align*}
		Therefore, $c_2(\Omega_S^1(\log D))=e(S-D)$.
	\end{proof}

	To show Theorem \ref{thm: Main 1}, we need the following.
	\begin{proposition}\label{prop: e leq pg+1}
		Let $(S,D)$ be a log smooth pair with a reduced boundary $D$. Suppose $p_a(D)\leq 2(l+q(S))+1-h^{1,1}(S)$, where $l$ is the number of edges of the dual graph of $D$. Then we have $$e(S-D)\leq 2p_g(S,D)+1.$$
		If $p_g(S)=0$, we have $e(S-D)\leq p_g(S,D)+1$.
	\end{proposition}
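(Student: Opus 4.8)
The plan is to reduce the whole statement to a single numerical identity for $e(S-D)$ and then combine the hypothesis with a lower bound for the log geometric genus $p_g(S,D)$. First I would feed \Cref{prop: topological Euler characteristic}, which gives $e(S-D)=c_2(\Omega_S^1(\log D))=\bar{c}_2$, into \Cref{prop: Noether equality}, which gives $\bar{c}_2=e(S)+2(p_a(D)-1-l)$, so that $e(S-D)$ is expressed purely through $e(S)$, $p_a(D)$ and $l$. Next I would rewrite $e(S)$ via the Hodge/Betti decomposition of the smooth projective surface $S$, namely $e(S)=2-4q(S)+2p_g(S)+h^{1,1}(S)$ (using $b_1=2q(S)$ and $b_2=2p_g(S)+h^{1,1}(S)$). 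Substituting yields
\[e(S-D)=2p_g(S)+h^{1,1}(S)-4q(S)+2p_a(D)-2l.\]
Now I would invoke the standing hypothesis in the form $h^{1,1}(S)\leq 2l+2q(S)+1-p_a(D)$, which collapses the right-hand side to
\[e(S-D)\leq 2p_g(S)-2q(S)+p_a(D)+1.\]

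The second ingredient is a lower bound for $p_g(S,D)$. I would use the adjunction sequence
\[0\to \Oc_S(K_S)\to \Oc_S(K_S+D)\to \omega_D\to 0,\]
whose long exact sequence gives $p_g(S,D)\geq p_g(S)+h^0(\omega_D)-h^1(S,K_S)$. Since $h^1(S,K_S)=q(S)$ by Serre duality on $S$, and since Serre duality on the Gorenstein (nodal) curve $D$ gives $h^0(\omega_D)=h^1(\Oc_D)=p_a(D)+c-1$, where $c$ denotes the number of connected components of $D$, this reads
\[p_g(S,D)\geq p_g(S)+p_a(D)+c-1-q(S).\]

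Finally I would combine the two displays. For the first inequality it suffices to check $2p_g(S)-2q(S)+p_a(D)\leq 2p_g(S,D)$, which after inserting the lower bound reduces to $p_a(D)\geq 2-2c$; this holds because the arithmetic genus of the disjoint union of the connected components satisfies $p_a(D)=\sum_j p_a(D^{(j)})-(c-1)\geq 1-c$ (each connected component being a reduced SNC, hence nodal, curve with nonnegative arithmetic genus), together with $c\geq 1$. For the case $p_g(S)=0$ the target $e(S-D)\leq p_g(S,D)+1$ reduces, after the same substitution, to $q(S)+c\geq 1$, which is immediate. I expect the only genuinely delicate point to be the bookkeeping with the connected components of $D$: the term $c-1$ in $h^0(\omega_D)$ is precisely what compensates for a possibly negative $p_a(D)$, so the estimates are tight and this contribution must not be discarded.
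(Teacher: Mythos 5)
Your proof is correct and follows essentially the same route as the paper: your identity $e(S-D)=h^{1,1}(S)+2p_g(S)-4q(S)+2p_a(D)-2l$ combined with the hypothesis is exactly the paper's observation that $\chi(S,\Omega^1_S(\log D))=2(q(S)+l)+1-h^{1,1}(S)-p_a(D)\geq 0$, and your adjunction-sequence bound $p_g(S,D)\geq p_g(S)+p_a(D)+c-1-q(S)$ is a mild refinement of the paper's closing estimates $\chi(S,K_S+D)\leq p_g(S,D)$ and $\chi(\Oc_S)\leq 1+p_g(S)\leq 1+p_g(S,D)$ (the extra $c-1$ from the connected components is not actually needed). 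Both arguments implicitly use $D\neq 0$ (so that $c\geq 1$, resp.\ $h^2(S,K_S+D)=h^0(S,-D)=0$).
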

	
	\begin{proof}
		By Proposition \ref{prop: Noether equality} and Proposition \ref{prop: topological Euler characteristic}, we have 
		\begin{equation}\label{eq: chern 1}
		e(S-D)=e(S)+2p_a(D)-2-2l.
		\end{equation} 
		Note that \begin{equation}\label{eq: chern 2}
		e(S)=2-4q(S)+2p_g(S)+h^{1,1}(S).
		\end{equation}
		Substituting (\ref{eq: chern 2}) into (\ref{eq: chern 1}) yields
		\begin{equation}\label{eq: chern 3}
	e(S-D)=h^{1,1}(S)+2p_g(S)-4q(S)+2p_a(D)-2l.
		\end{equation} 
		 On the other hand, the Riemann-Roch theorem gives
		\begin{equation}\label{eq: chern 4}
		\chi(S,\Omega_S^1(\log D))=p_a(D)-1-e(S-D)+2\chi(S,\Oc_S).
		\end{equation}
		Substituting (\ref{eq: chern 3}) into (\ref{eq: chern 4}) we obtain
		\begin{equation}
	\chi(S,\Omega_S^1(\log D))=2(q(X)+l)+1-h^{1,1}(X)-p_a(D).
		\end{equation}
	By assumption we have $\chi(S,\Omega_S^1(\log D))\geq 0$. By the proof of Proposition \ref{prop: topological Euler characteristic}, we have
		\[e(S-D)=\chi(\Oc_S)+\chi(K_S+D)-\chi(\Omega_S^1(\log D))\leq 1+p_g(S)+p_g(S,D).\]
		Note that $p_g(S)\leq p_g(S,D)$, which proves the proposition.
	\end{proof}
	
	\begin{theorem}\cite[Theorem  1.1]{Miy84}
		Let $(S,D)$ be a log smooth projective surface with a reduced boundary $D$ such that $K_S+D$ is pseudo-effective. Let $K_S+D=P+N$ be the Zariski-Fujita decomposition, where $P$ is the nef part and $N$ is the negative part. Then the inequality 
		\[\frac{1}{3}P^2\leq c_2(\Omega_S^1(\log D))-\frac{1}{4}N^2\]
		holds.
	\end{theorem}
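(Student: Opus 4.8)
The plan is to recognise the asserted estimate as a refined logarithmic Bogomolov--Miyaoka--Yau inequality and to obtain it by running Miyaoka's semistability argument on the logarithmic cotangent sheaf $\mathcal{E}=\Omega_S^1(\log D)$, which is locally free of rank $2$ with $c_1(\mathcal{E})=K_S+D$ and $c_2(\mathcal{E})=\bar{c}_2$. First I would rewrite the inequality in a more transparent form: since the Zariski--Fujita decomposition $K_S+D=P+N$ satisfies $P\cdot N=0$, one has $(K_S+D)^2=P^2+N^2$, hence $P^2=\bar{c}_1^2-N^2$, and a short computation shows that $\tfrac13 P^2\le\bar{c}_2-\tfrac14 N^2$ is equivalent to
\[
\bar{c}_1^2\le 3\bar{c}_2+\tfrac14 N^2.
\]
As $N^2\le 0$ this is a strengthening of the plain log inequality $\bar{c}_1^2\le 3\bar{c}_2$, the extra term $\tfrac14 N^2$ being the correction contributed by the negative-definite configuration $\Supp(N)$.

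The positivity input that drives everything is Miyaoka's generic semipositivity theorem: because $c_1(\mathcal{E})=K_S+D$ is pseudo-effective, the sheaf $\mathcal{E}=\Omega_S^1(\log D)$ is generically semipositive, i.e.\ for a general member $C$ of $|mH|$ with $H$ ample and $m\gg 0$ every quotient line bundle of $\mathcal{E}|_C$ has non-negative degree. I would use this together with the Bogomolov discriminant inequality on $\Pro(\mathcal{E})$: the naive semistability bound only yields $\bar{c}_1^2\le 4\bar{c}_2$, and the improvement of the constant from $4$ to $3$ comes from feeding the generic semipositivity into the intersection theory of the tautological class, analysing the Harder--Narasimhan filtration of $\mathcal{E}$ with respect to a movable polarisation and showing that the destabilising quotients can only help.

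For the negative-part correction I would contract the negative-definite configuration $\Supp(N)$, which by the peeling construction consists of admissible rational twigs, rods and forks, to a normal surface $S'$ carrying only log-terminal (quotient) singularities; this is precisely the framework of \cite{Miy84}. Applying the orbifold form of the inequality on $S'$ and then comparing with $S$ via the minimal resolution, the sum of the local orbifold defects at the singular points should reassemble into $-\tfrac14 N^2$, the coefficients of $N$ being governed by the bark system \eqref{eq: system of linear equations} with $0<a_i\le 1$ and by the relation $P\cdot C=0$ for every component $C$ of $N$. I expect this last step---matching the local contributions at the quotient singularities with the global Zariski--Fujita negative part and checking that the combinatorics yields exactly the coefficient $\tfrac14$---to be the main obstacle, since it is where the negative-definiteness and the precise shape of the barks, rather than merely their existence, enter the estimate.
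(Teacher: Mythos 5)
The paper does not prove this statement at all: it is quoted with the citation \cite[Theorem 1.1]{Miy84} and used as a black box, so there is no in-paper argument to compare yours against. Your preliminary reformulation is correct --- since $P\cdot N=0$ one has $\bar{c}_1^2=P^2+N^2$, and the asserted inequality is indeed equivalent to $\bar{c}_1^2\leq 3\bar{c}_2+\tfrac14 N^2$ --- and generic semipositivity of $\Omega_S^1(\log D)$ under pseudo-effectivity of $K_S+D$ is the right engine.

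As a proof, however, the proposal has two genuine gaps. First, the only two quantitative facts that constitute the content of the theorem --- improving the Bogomolov-type constant from $4$ to $3$, and extracting exactly the correction $\tfrac14N^2$ --- are precisely the steps you defer and label as ``the main obstacle''; nothing in the sketch actually produces either number. Second, the route you propose for the correction term is not available in this generality: the hypothesis is only that $K_S+D$ is pseudo-effective, so $N$ is the full negative part of the Zariski--Fujita decomposition, not the bark $\Bk(D)$ of the peeling construction. Its support need not consist of admissible rational twigs, rods and forks (it may contain $(-1)$-curves, components not lying in $D$, or negative-definite configurations that are not resolution graphs of log terminal singularities), so one cannot in general contract $\Supp(N)$ to a normal surface with only quotient singularities. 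Moreover, the orbifold form of the inequality you want to apply on the contracted surface is in \cite{Miy84} a \emph{consequence} of Theorem 1.1, so the argument as organized is circular. Miyaoka's actual proof works directly on $S$, via the tautological class on $\Pro(\Omega_S^1(\log D))$ together with the orthogonality $P\cdot N=0$ and the negative definiteness of $N$; no contraction to an orbifold is involved.
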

	
	\begin{corollary}\label{thm: BMY inequality}
		Let $(X,D)$ be a minimal log smooth surface of log general type. Then we have 
		\[\frac{1}{3}(K_X+D^{\sharp})^2\leq c_2(\Omega_X^1(\log D))-\frac{1}{4}\Bk(D)^2.\]
	\end{corollary}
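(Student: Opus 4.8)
The plan is to derive this as a direct specialization of the preceding theorem of Miyaoka, once the peeling decomposition is identified with the Zariski--Fujita decomposition of $K_X+D$. The content is essentially bookkeeping: the real work has already been done in stating Miyaoka's inequality and in setting up the theory of peeling, so the task is to check that the hypotheses match and that $P$ and $N$ are exactly the two pieces $K_X+D^{\sharp}$ and $\Bk(D)$.

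First I would verify that Miyaoka's theorem applies, i.e. that $K_X+D$ is pseudo-effective. Since $(X,D)$ is of log general type, we have $(K_X+D^{\sharp})^2>0$ by definition; as $\Bk(D)\geq 0$ this forces $K_X+D$ to be big, hence in particular pseudo-effective, so the hypothesis of the theorem is satisfied.

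The key step is then to recognize that the nef part is $P=K_X+D^{\sharp}$ and the negative part is $N=\Bk(D)$. By \Cref{def: minimal log surfaces}, minimality of $(X,D)$ is precisely the assertion that the expression $K_X+D=(K_X+D^{\sharp})+\Bk(D)$ satisfies the four numerical conditions characterizing the Zariski--Fujita decomposition recalled in Section~2: $\Bk(D)$ is an effective $\Q$-divisor whose intersection matrix is negative definite, $K_X+D^{\sharp}$ is nef, and $(K_X+D^{\sharp})\cdot C=0$ for every prime component $C$ of $\Bk(D)$. Since those four properties determine the decomposition uniquely, I conclude $P=K_X+D^{\sharp}$ and $N=\Bk(D)$. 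Substituting these two identifications into the inequality $\frac{1}{3}P^2\leq c_2(\Omega_X^1(\log D))-\frac{1}{4}N^2$ gives the asserted bound.

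I do not expect a substantive obstacle here; the only point requiring care is the identification of the peeling decomposition with the Zariski--Fujita decomposition, and this is built directly into the definition of minimality, so the corollary is immediate once the pseudo-effectivity of $K_X+D$ is noted.
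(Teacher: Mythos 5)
Your proof is correct and is exactly the intended argument: the paper states this corollary with no written proof, treating it as an immediate specialization of Miyaoka's theorem once one notes that the four conditions in \Cref{def: minimal log surfaces} are precisely the four numerical properties that uniquely characterize the Zariski--Fujita decomposition, so $P=K_X+D^{\sharp}$ and $N=\Bk(D)$. Your additional check of pseudo-effectivity is fine (and is in any case already implicit in the minimality hypothesis, since the existence of the decomposition $K_X+D=(K_X+D^{\sharp})+\Bk(D)$ presupposes it).
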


\subsection{Linear systems on rational surfaces}

Consider a rational surface $S$ for which we have a sequence of blowing ups
\begin{equation}\label{eq: blow ups}
\pi\co S=S_n\xrightarrow{\pi_n} S_{n-1}\xrightarrow{\pi_{n-1}} \cdots \xrightarrow{\pi_2} S_1\xrightarrow{\pi_1} S_0=\Pro^2,
\end{equation}

	where $\pi_i\co S_i\to S_{i-1}$ is the blowing-up of $S_{i-1}$ at a point $p_i$ with exceptional divisor $E_i$ on $S_i$ for $i=1,\dots,n$. Let $\bar{E}_i$ be the total transfomational of $E_i$ on $S$, and $H$ be the class of total transform of a line in $\Pro^2$, then $H,\bar{E}_1,\dots, \bar{E}_n$ forms a free basis of $\Pic(S)$. The canonical class $K_S$ of $S$ is given in terms of $H,\bar{E}_1,\dots, \bar{E}_n$ as $-3H+\bar{E}_1+\cdots+\bar{E}_n$.
	
	Let $p\in \Pro^2$ be a point. We say that $q$ is a \emph{infinitely near point }to $P$ of \emph{order} $n$ on $\Pro^2$, if there exists a birational morphism as (\ref{eq: blow ups}), such that $p_1=p$, $\pi_i(p_{i+1})=p_i$, $i=1,\dots,n$ and $q\in E_n$. We denote by $\mathfrak{B}(\Pro^2)$ the set of infinitely near points on $\Pro^2$. A \emph{cluster} $\mathcal{K}$ on $\Pro^2$  is a finite subset of $\mathfrak{B}(\Pro^2)$.
	
	Given a cluster $\mathcal{K}=\{p_1,\dots,p_n\}$ on $\Pro^2$ , we denote by $\pi_{\mathcal{K}}\co S_{\mathcal{K}}\to\Pro^2$ the blowing-up of all the points in $\mathcal{K}$. For a curve on $\Pro^2$, we denote by $\bar{C}^{\mathcal{K}}$ (or simply $\bar{C}$ if $\mathcal{K}$ is understood) the total transform of $C$, and $\widetilde{C}^{\mathcal{K}}$ (or simply $\widetilde{C}$ if $\mathcal{K}$ is understood)  the strict transform of $C$. We define the \emph{multiplicity} of $C$ at the point $p_i\in \mathcal{K}$, denote by $m_i(C)$, as the multiplicity of the strict transform $\widetilde{C}$ at $p$ on $S_{\mathcal{K}}$, we have 
	\[\bar{C}^{\mathcal{K}}=\widetilde{C}^{\mathcal{K}}+\sum_{i=1}^nm_i(C)\bar{E}_i,\]
	where $E_p$ is the exceptional divisor over $P$.
	
 	Given an effective divisor $E=\sum_{i=1}^n\nu_i \bar{E}_i$ on $S_{\mathcal{K}}$. The ideal sheaf $$\mathcal{I}=\pi_{\mathcal{K}*}\Oc_{S_{\mathcal{K}}}(-E)$$
 	defines a zero-dimensional subscheme $Z$ of $\Pro^2$. For any $d\in\N$, $H^0(\Pro^2,\Oc_{\Pro^2}(d)\otimes\mathcal{I})$ defines a sublinear system $V$ of $H^0(\Pro^2,\Oc_{\Pro^2}(d))$ with base ideal $\mathcal{I}$.
 	We see that $C\in V$ if and only if $m_i(C)\geq \nu_i$ for all $1\leq i\leq n$. Thus the linear system $V$ is called a linear system of plane curves of degree $d$ with assigned base points $p_1,\dots,p_n$ with respective multiplicities at least $\nu_1,\dots,\nu_n$, and we say it is type $(d;\nu_1,\dots,\nu_n)$.  
 	
 	\begin{lemma}\label{lem: big p2}
 		Let $\mathcal{K}=\{p_1,\dots,p_n\}$ be a cluster on $\Pro^2$, and $V$ a linear system of plane curves of degree $d$ with assigned base points $\mathcal{K}$ of type $(d;\nu_1,\dots,\nu_n)$.  We have
 		\begin{enumerate}
 			\item $\dim V \geq \frac{1}{2}d(d+3)-\frac{1}{2}\sum_{i=1}^n \nu_i(\nu_i+1) .$
 			\item The divisor $dH-\nu_1\bar{E}_1-\cdots -\nu_n\bar{E}_n$ is big if $d^2-\sum_{i=1}^n \nu_i^2>0.$
 		\end{enumerate}
 		
 	\end{lemma}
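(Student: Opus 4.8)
The plan is to pull everything back to the blown-up surface $S_{\mathcal{K}}$, where the assigned multiplicity conditions are packaged into the single line bundle $L:=dH-\sum_{i=1}^{n}\nu_i\bar{E}_i$, and then extract both statements from Riemann--Roch on a rational surface. First I would record the identification of global sections
\[
H^0(\Pro^2,\Oc_{\Pro^2}(d)\otimes\mathcal{I})=H^0(S_{\mathcal{K}},L),
\]
which comes from the projection formula $\pi_{\mathcal{K}*}(\pi_{\mathcal{K}}^*\Oc_{\Pro^2}(d)\otimes\Oc_{S_{\mathcal{K}}}(-E))=\Oc_{\Pro^2}(d)\otimes\mathcal{I}$ and the invariance of $H^0$ under pushforward; hence $\dim V=h^0(S_{\mathcal{K}},L)-1$. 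Using the orthogonality relations $H^2=1$, $\bar{E}_i\cdot\bar{E}_j=-\delta_{ij}$ and $H\cdot\bar{E}_i=0$ for the total transforms, together with $K_{S_{\mathcal{K}}}=-3H+\sum_{i=1}^n\bar{E}_i$, I would compute the two intersection numbers
\[
L^2=d^2-\sum_{i=1}^{n}\nu_i^2,\qquad L\cdot K_{S_{\mathcal{K}}}=-3d+\sum_{i=1}^{n}\nu_i.
\]

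The single cohomological input I need is the vanishing $h^2(S_{\mathcal{K}},mL)=0$ for all $m\geq 1$. By Serre duality this equals $h^0(K_{S_{\mathcal{K}}}-mL)$, and since $H$ is nef (it is pulled back from $\Oc_{\Pro^2}(1)$) while $(K_{S_{\mathcal{K}}}-mL)\cdot H=-(3+md)<0$, the class $K_{S_{\mathcal{K}}}-mL$ pairs negatively with a nef divisor and so cannot be effective. Note that $L^2>0$ forces $d\geq 1$, so this negativity is genuine. With $h^2=0$ and $\chi(S_{\mathcal{K}},\Oc)=1$, Riemann--Roch gives
\[
h^0(S_{\mathcal{K}},mL)\geq \chi(S_{\mathcal{K}},mL)=1+\tfrac12 m^2L^2-\tfrac12 m\,L\cdot K_{S_{\mathcal{K}}}.
\]
For part (1) I would take $m=1$ and substitute the intersection numbers, obtaining $\dim V=h^0(L)-1\geq \tfrac12 d(d+3)-\tfrac12\sum_{i=1}^{n}\nu_i(\nu_i+1)$, precisely the asserted bound. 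For part (2), the hypothesis $d^2-\sum_{i=1}^{n}\nu_i^2=L^2>0$ makes the leading term $\tfrac12 m^2L^2$ of $\chi(mL)$ positive, so $h^0(mL)$ grows quadratically in $m$; this is exactly $\kappa(L)=2$, i.e.\ $L$ is big.

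The argument is essentially Riemann--Roch plus a free vanishing, so there is no deep obstacle; the point to handle with care is that $\bar{E}_1,\dots,\bar{E}_n$ are the \emph{total} transforms even when $\mathcal{K}$ contains infinitely near points. It is precisely for the total transforms that the clean orthogonality $\bar{E}_i\cdot\bar{E}_j=-\delta_{ij}$ holds, and this is what makes the computations of $L^2$, $L\cdot K_{S_{\mathcal{K}}}$ and $(K_{S_{\mathcal{K}}}-mL)\cdot H$ blind to the infinitely near structure of the cluster. I would therefore verify these orthogonality relations directly (e.g.\ by applying the projection formula to each blow-up $\pi_i$) before running the numerology.
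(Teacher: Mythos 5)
Your proof is correct and, for part (2), follows essentially the same route as the paper: both arguments reduce bigness to the quadratic growth $h^0(mL)\geq \tfrac12(d^2-\sum\nu_i^2)m^2+O(m)$, the paper obtaining this by applying part (1) to the system of type $(md;m\nu_1,\dots,m\nu_n)$ and you obtaining it directly from Riemann--Roch on $S_{\mathcal{K}}$. The only real difference is that for part (1) the paper simply cites \cite[Section 1.2]{ACM02}, whereas you supply the standard self-contained argument (projection formula to identify $\dim V=h^0(S_{\mathcal{K}},L)-1$, the $h^2$-vanishing via Serre duality against the nef class $H$, then Riemann--Roch); note in passing that your vanishing does not even need $d\geq 1$, since $(K_{S_{\mathcal{K}}}-mL)\cdot H=-3-md<0$ already because of the $-3$.
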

 	
 	\begin{proof}
 		For (1), see \cite[Section 1.2]{ACM02}. For (2), since $$h^0(S_{\mathcal{K}},m(dH-\sum_{i=1}^nv_i\bar{E}_i))\geq \frac{1}{2}dm(dm+3)-\frac{1}{2}\sum_{i=1}^nv_im(v_im+1)=\frac{1}{2}(d^2-\sum_{i=1}^nv_i^2)m^2+O(m)$$
 		it follows that if $d^2-\sum_{i=1}^nv_i^2>0$ then $dH-\nu_1\bar{E}_1-\cdots -\nu_n\bar{E}_n$ is big.
 		\end{proof}
 	
 	Consider the Hirzebruch surface $\Sigma_e=\Pro(\Oc\oplus\Oc(e))$ of degree $e\geq 0$. Let $\pi\co \Sigma_e\to\Pro^1$ be the $\Pro^1$-fiberation, let $\Delta_0$ and $\Gamma$ be its section with $\Delta_0^2=-e$ and its general fiber. Set $\Delta_{\infty}=\Delta_0+e\Gamma$, then $\Delta_{\infty}$ is the class of tautological line bundle of $\Sigma_e$ and we have $\Delta_{\infty}^2=e$. 
 	
 	\begin{lemma}\label{lem:big Hri}
 		Let $\mathcal{K}=\{p_1,\dots,p_n\}$ be a cluster on $\Sigma_e$, consider the linear system $V$ on $\Sigma_e$ with assigned base points $\mathcal{K}$ with repsective multiplicities $\nu_1,\dots,\nu_n$. Consider the very ample divisor $a\Delta_{\infty}+b\Gamma$ where $a$ and $b$ are positive integers.
 		Let $\mu\co S\to \Sigma_e$ be the blowing-up at $\mathcal{K}$. We have
 		\begin{enumerate}
 			\item $	\dim V\geq (a+1)(b+\frac{ae}{2})+a-\sum_{i=1}^n\frac{1}{2}\nu_i(\nu_i+1).$
 			\item The divisor $\mu^*(a\Delta_{\infty}+b\Gamma)-\sum_{i=1}^n\nu_i\bar{E}_i$ is big if
 			$a(b+\frac{ae}{2})-\frac{1}{2}\sum_{i=1}^n\nu_i^2>0.$
 		\end{enumerate}
 	\end{lemma}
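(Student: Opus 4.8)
The plan is to follow the proof of Lemma \ref{lem: big p2} line by line, replacing the cohomology of $\Oc_{\Pro^2}(d)$ by that of $\Oc_{\Sigma_e}(a\Delta_\infty + b\Gamma)$. The only non-formal input is the exact value of $h^0(\Sigma_e, a\Delta_\infty + b\Gamma)$; once that is in hand, both parts are immediate.

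For part (1), I would first compute $h^0(\Sigma_e, a\Delta_\infty + b\Gamma)$. Since $\Delta_\infty$ is the tautological class of $\Sigma_e = \Pro(\mathcal{E})$ with $\mathcal{E} = \Oc\oplus\Oc(e)$, the projection formula together with $\pi_*\Oc_{\Pro(\mathcal{E})}(a) = \mathrm{Sym}^a\mathcal{E} = \bigoplus_{j=0}^a\Oc_{\Pro^1}(je)$ gives
\[ h^0(\Sigma_e, a\Delta_\infty + b\Gamma) = \sum_{j=0}^a h^0(\Pro^1, \Oc(je+b)) = \sum_{j=0}^a (je+b+1) = (a+1)\Bigl(b+1+\tfrac{ae}{2}\Bigr), \]
where I use that $a,b>0$ and $e\geq 0$ force $je+b\geq 0$ for every $0\leq j\leq a$, so that no summand vanishes. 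Subtracting one yields $\dim|a\Delta_\infty+b\Gamma| = (a+1)(b+\tfrac{ae}{2})+a$, which is precisely the base-point-free count appearing in the asserted bound. As each assigned point of multiplicity $\nu_i$ imposes at most $\tfrac{1}{2}\nu_i(\nu_i+1)$ linear conditions, part (1) follows.

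For part (2), I would apply part (1) to the divisor $ma\Delta_\infty+mb\Gamma$ with base multiplicities $m\nu_i$, obtaining
\[ h^0\Bigl(S,\; m\mu^*(a\Delta_\infty+b\Gamma)-\sum_{i=1}^n m\nu_i\bar{E}_i\Bigr) \geq (ma+1)\Bigl(mb+\tfrac{mae}{2}\Bigr)+ma+1-\sum_{i=1}^n\tfrac{1}{2}m\nu_i(m\nu_i+1). \]
The coefficient of $m^2$ on the right is $a(b+\tfrac{ae}{2})-\tfrac{1}{2}\sum_{i=1}^n\nu_i^2$, which equals $\tfrac{1}{2}\bigl[(a\Delta_\infty+b\Gamma)^2-\sum_{i=1}^n\nu_i^2\bigr]$ since $(a\Delta_\infty+b\Gamma)^2 = a^2e+2ab$. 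Hence when $a(b+\tfrac{ae}{2})-\tfrac{1}{2}\sum\nu_i^2>0$ the right-hand side grows like $m^2$, and therefore $\mu^*(a\Delta_\infty+b\Gamma)-\sum\nu_i\bar{E}_i$ is big.

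I do not anticipate any real obstacle: the argument is entirely parallel to the $\Pro^2$ case, and the single point requiring care is the vanishing-free range $je+b\geq 0$ that makes the sum $\sum_{j=0}^a(je+b+1)$ collapse to the clean closed form $(a+1)(b+1+\tfrac{ae}{2})$. The reformulation of the bigness hypothesis as positivity of the self-intersection $(a\Delta_\infty+b\Gamma)^2-\sum\nu_i^2$ also confirms that the criterion is the exact Hirzebruch-surface analogue of the condition $d^2-\sum\nu_i^2>0$ in Lemma \ref{lem: big p2}.
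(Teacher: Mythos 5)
Your proposal is correct and follows essentially the same route as the paper: compute $h^0(\Sigma_e,a\Delta_\infty+b\Gamma)=(a+1)(b+1+\tfrac{ae}{2})$ by pushing forward to $\Pro^1$, subtract the at most $\tfrac{1}{2}\nu_i(\nu_i+1)$ conditions imposed by each assigned point, and deduce bigness from the quadratic growth of $h^0$ of multiples. Your explicit check that $je+b\geq 0$ for all $0\leq j\leq a$ (so no summand vanishes) is a small point of care the paper leaves implicit, but the argument is otherwise identical.
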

 
 \begin{proof}
  For $a\geq 0,b\in\Z$ , we have 
 \[\pi_*\Oc(a\Delta_{\infty} +b\Gamma)=\mathrm{Sym}^a(\Oc\oplus\Oc(e))\otimes\Oc(b)=\sum_{i=0}^a\Oc(b+ie)\]
 Therefore $h^0(\Sigma_e,a\Delta_{\infty}+b)= (a+1)(b+\frac{ae}{2}+1)$ and (1) holds. For (2), since
 \begin{align*}
 h^0(S,m(\mu^*(a\Delta_{\infty}+b\Gamma)-\sum_{i=1}^n\nu_i\bar{E}_i))&\geq (am+1)(bm+
\frac{ame}{2})-\frac{1}{2}\sum_{i=1}^nv_im(v_im+1)\\
&=(a(b+\frac{ae}{2})-\frac{1}{2}\sum_{i=1}^n\nu_i^2)m^2+O(m)
 \end{align*}
it follows that if $a(b+\frac{ae}{2})-\frac{1}{2}\sum_{i=1}^n\nu_i^2>0$ then $\mu^*(a\Delta_{\infty}+b\Gamma)-\sum_{i=1}^n\nu_i\bar{E}_i$ is big.
 \end{proof}
 	
	\subsection{Semi-positivity of $f_*\omega_{S/B}(D)$}
	Let $f\co S\to B$ be a fiberation from a smooth projective surface to a smooth curve.
	Let $D$ be a reduced divisor on $S$ which has only simple normal corssing singularities. In this section we will show that $f_*\omega_{S/B}(D)$ is a nef vector bundle on $B$. We need the following positivity result of a fiberation under some monodromy conditions and some assumption on the divisor $D$, which was proved by Fujino \cite{OFu04}.
	\begin{theorem}\label{thm:  log semi-positivity}
		Let $f\co X\to Y$ be a surjective morphism from a non-singular projective variety $X$ to a projective variety $Y$. Let $D=\sum D_i$ be a simple normal crossing divisor on $X$. Assume the following conditions hold
		\begin{enumerate}
			\item $D$ is strongly horizontal with respect to $f$, i.e., each irreducible component of $D_{i_1}\cap D_{i_2}\cap\dots\cap D_{i_k}$ is dominant onto $Y$. 
			\item   Let $\Sigma$ be a simple normal crossing divisor on $Y$ such that $f$ is smooth  and $D$ is relatively normal crossing over over $Y_0=Y\setminus\Sigma$.
			\item Put $X_0=f^{-1}(Y_0),D_0=D\cap X_0,f_0=f|_{X_0}$ and $d=\dim X-\dim Y$. All the local monodromies on the local system $R^{d+i}f_{0*}\C_{X_0\setminus D_0}$ around every irreducible component of $\Sigma$ are unipotent.
		\end{enumerate} Then $R^if_*\omega_{X/Y}(D)$ is a nef vector bundle on $Y$.
	\end{theorem}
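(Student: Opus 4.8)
The statement is a logarithmic, mixed--Hodge--theoretic enhancement of Fujita's semipositivity theorem for $f_*\omega_{X/Y}$, so the plan is to follow the Fujita--Kawamata strategy (Deligne canonical extensions of Hodge bundles, together with curvature and norm estimates) and adapt it to the variation of mixed Hodge structure carried by the cohomology of the \emph{open} fibers $X_y\setminus D_y$. The three hypotheses are tailored to make each step of that strategy work: strong horizontality controls the Hodge structure fiberwise, the boundary $\Sigma$ isolates the degeneration locus, and unipotency of the local monodromies is exactly what makes the canonical extension well behaved.

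First I would reduce to the case $\dim Y=1$. Nefness of a vector bundle can be tested on curves: by Proposition \ref{prop: nef vector  bundless}(3) it suffices that every quotient line bundle of the pullback to a non-singular projective curve have nonnegative degree. Cutting $Y$ by general members of a very ample system and invoking Bertini, the three hypotheses (SNC of $D$, strong horizontality, and unipotent monodromy of the restricted family around the restricted boundary) are inherited by the general curve, while cohomology-and-base-change identifies, over the good locus, the restriction of $R^if_*\omega_{X/Y}(D)$ with the analogous sheaf of the restricted family. Thus I may assume $Y$ is a smooth projective curve and work over $Y_0=Y\setminus\Sigma$ with a smooth family of SNC pairs $(X_y,D_y)$.

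Second, over $Y_0$ I would set up the Hodge model. The relative logarithmic de Rham complex $\Omega^\bullet_{X_0/Y_0}(\log D_0)$ computes the local system $R^{d+i}f_{0*}\C_{X_0\setminus D_0}$, whose fibers $H^{d+i}(X_y\setminus D_y,\C)$ carry Deligne's mixed Hodge structure; strong horizontality of $D$ guarantees that these assemble into an admissible variation of mixed Hodge structure (Steenbrink--Zucker) whose weight-graded pieces are polarized pure variations. The top piece of the Hodge filtration recovers $R^if_{0*}\omega_{X_0/Y_0}(D_0)$, since fiberwise $\omega_{X_y}(D_y)=\Omega^d_{X_y}(\log D_y)$ and $H^i\bigl(X_y,\Omega^d_{X_y}(\log D_y)\bigr)=\mathrm{Gr}_F^d H^{d+i}(X_y\setminus D_y)$. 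So over $Y_0$ the sheaf in question is the lowest Hodge subbundle $F^{\mathrm{top}}$ of this variation. The final step is then the extension across $\Sigma$ together with positivity: the unipotent-monodromy hypothesis makes Deligne's canonical extension of the flat bundle have nilpotent residues, so that by Schmid's nilpotent orbit theorem (and the Cattani--Kaplan--Schmid asymptotics in the mixed case) the Hodge subbundle extends to a subbundle of the canonical extension, and this extension is identified with $R^if_*\omega_{X/Y}(D)$ across the degenerate fibers, with unipotency removing any fractional boundary corrections. On each polarized graded piece the Hodge metric on $F^{\mathrm{top}}$ has semipositive curvature by Griffiths' curvature formula, while Schmid's norm estimates bound its degeneration to at worst logarithmic growth at the unipotent boundary points; hence every quotient line bundle of the extension has nonnegative degree. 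Assembling the graded pieces by means of Proposition \ref{prop: subvector bundle nef} and Proposition \ref{prop: nef vector  bundless} then upgrades this to nefness of $R^if_*\omega_{X/Y}(D)$ itself.

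The hard part will be the last step: showing that $R^if_*\omega_{X/Y}(D)$ is \emph{precisely} the canonical extension of the top Hodge bundle of an admissible variation of mixed Hodge structure, and that this extension is nef rather than merely pseudo-effective. This requires the full asymptotic Hodge theory at the boundary, matching the growth of the Hodge metric under unipotent monodromy with the behaviour of the log dualizing sheaf along the degenerate fibers. One could instead package the entire argument through M. Saito's theory of mixed Hodge modules, where the semipositivity of the relevant Hodge filtration pieces is available directly; but along either route the analytic degeneration analysis near $\Sigma$ is the genuine obstacle, and it is exactly where conditions (1)--(3) are consumed.
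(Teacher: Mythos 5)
The paper does not prove this statement at all: it is imported verbatim from Fujino \cite{OFu04}, and the remark that follows it in the paper merely observes that the only case actually used later ($X$ a surface, $Y$ a curve) is a consequence of Steenbrink and Zucker's work on variations of mixed Hodge structure. So your proposal can only be compared with the argument in the literature, and at the level of strategy it does track that argument: realize $R^if_{0*}\omega_{X_0/Y_0}(D_0)$ over the smooth locus as the top Hodge filtration piece of the admissible variation of mixed Hodge structure on $R^{d+i}f_{0*}\C_{X_0\setminus D_0}$, extend canonically across $\Sigma$ using unipotency, and extract semipositivity from the asymptotic Hodge theory on the polarized weight-graded pieces. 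That is the correct road map.

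As a proof, however, it has genuine gaps. First, the reduction to $\dim Y=1$ by cutting with general very ample divisors is not valid: nefness must be tested against quotient line bundles of $\nu^*R^if_*\omega_{X/Y}(D)$ for an \emph{arbitrary} finite morphism $\nu\colon C\to Y$ from a smooth projective curve, and curves contained in (or badly positioned with respect to) $\Sigma$ are never reached by a Bertini cut; for such $C$ the base-changed family need not satisfy hypotheses (1)--(3), and restoring them requires base change, normalization, resolution and a fresh unipotent reduction over $C$, together with a comparison of the direct image before and after these operations --- this is precisely the delicate part of the Fujita--Kawamata method, not a formality. Second, the two steps you explicitly defer as ``the hard part'' --- that $R^if_*\omega_{X/Y}(D)$ is \emph{exactly} the canonical extension of the top Hodge bundle across $\Sigma$, and that this extension is nef rather than merely pseudo-effective --- are essentially the entire content of the theorem; moreover, passing from semipositivity of the graded pieces of the weight filtration to nefness of $F^{\mathrm{top}}$ of the mixed variation uses strictness of the Hodge filtration for admissible variations and the fact that extensions of nef bundles are nef, neither of which is supplied by the same-rank subsheaf proposition you invoke. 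In short: the outline is consistent with Fujino's proof, but every step that makes the theorem nontrivial is asserted rather than carried out.
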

	
	\begin{remark}
		In this paper we only need the Theorem \ref{thm:  log semi-positivity} for the case when $X$ is a surface and $Y$ is a curve, for this case the above theorem is a consequence of the work of Steenbrink and Zucker on variation of mxied Hodge structures (see \cite[Section 5]{SZ85}). 
		The condition (1) becomes that $D$ is a disjoint union of non-singular curves $C_j$ which are horizontal with respect to $f$. The condition (2) is automatically satisfied. The condition (3) can be achieved after replacing $Y$ by a finite convering and taking a suitable birational model of the base change of $X$.
	\end{remark}
	
	\begin{lemma}\label{lem: strongly horizontal modify}
		Let $f\co X\to Y$ be a fiberation of non-singular surface $X$ to a curve $Y$ and $D$ be a SNC divisor on $X$. Then there exists an birational morphism $g\co Z\to X$ from a non-singular surface $Z$ to $X$ such that the horizontal part of $D'$ is strongly horizontal with respect to $h=f\circ g$ where $D'$ is the strict transform of $D$ and $g^*D$ is also SNC. Moreover, if $h_*\omega_{Z/Y}(D')$ is nef, then so is $f_*\omega_{X/Y}(D)$.
	\end{lemma}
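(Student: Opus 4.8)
The plan is to build $g$ by blowing up exactly the finitely many points where two horizontal components of $D$ cross, and then to exhibit $h_*\omega_{Z/Y}(D')$ as a subbundle of full rank inside $f_*\omega_{X/Y}(D)$ so that Proposition \ref{prop: subvector bundle nef} applies. First I would split $D=D_h+D_v$ into its horizontal part $D_h$ (components dominating $Y$) and vertical part $D_v$ (components contained in fibers). Since $D$ is SNC, no three components meet at a point, so the horizontal part of $D'$ is strongly horizontal with respect to $h$ precisely when the strict transforms of the horizontal components are pairwise disjoint: for $k=1$ each horizontal component dominates $Y$ automatically, while for $k\ge 2$ an intersection of horizontal components is a finite set of points, which is dominant over the curve $Y$ only if it is empty. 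Thus the only obstruction is the finite set $\{p_1,\dots,p_n\}$ of crossing points of pairs of horizontal components.

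I let $g\co Z\to X$ be the blow-up of $X$ at $p_1,\dots,p_n$, with exceptional curves $E_1,\dots,E_n$. Each $p_i$ is an ordinary node of $D$ (two smooth transverse branches, with no third component through it by the SNC hypothesis), so after the blow-up the strict transforms of the two branches become disjoint and meet $E_i$ transversally at distinct points. Hence the horizontal part of $D'$ is now strongly horizontal, and the reduced total transform $D'+\sum_i E_i=\Supp(g^*D)$ remains SNC. For the final assertion I would compare relative log-canonical sheaves. As the $p_i$ are distinct points of the smooth surface $X$, one has $K_Z=g^*K_X+\sum_iE_i$ and $g^*D=D'+2\sum_iE_i$ (each node has multiplicity two), whence
\begin{equation*}
\omega_{Z/Y}(D')=g^*\bigl(\omega_{X/Y}(D)\bigr)\otimes\Oc_Z\Bigl(-\textstyle\sum_iE_i\Bigr).
\end{equation*}
Pushing forward by $g$ and applying the projection formula together with $g_*\Oc_Z(-\sum_iE_i)=\mathcal{I}$, the ideal sheaf of the reduced points $p_1,\dots,p_n$, gives $g_*\omega_{Z/Y}(D')=\omega_{X/Y}(D)\otimes\mathcal{I}$. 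Applying the left-exact functor $f_*$ to the inclusion $\omega_{X/Y}(D)\otimes\mathcal{I}\hookrightarrow\omega_{X/Y}(D)$ then realizes $h_*\omega_{Z/Y}(D')=f_*\bigl(\omega_{X/Y}(D)\otimes\mathcal{I}\bigr)$ as a subsheaf of $f_*\omega_{X/Y}(D)$.

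It remains to check that this inclusion has full rank, so that Proposition \ref{prop: subvector bundle nef} yields the conclusion. Both sheaves are locally free on the smooth curve $Y$, and $g$ is an isomorphism over the generic point of $Y$ (the $p_i$ lie in only finitely many fibers); consequently the general fibers of $f$ and $h$ are isomorphic and carry the same restricted divisor, so $\rank h_*\omega_{Z/Y}(D')=h^0(F,\omega_F(D|_F))=\rank f_*\omega_{X/Y}(D)$ for a general fiber $F$. Thus $h_*\omega_{Z/Y}(D')$ is a subbundle of $f_*\omega_{X/Y}(D)$ of the same rank, and if it is nef then so is $f_*\omega_{X/Y}(D)$ by Proposition \ref{prop: subvector bundle nef}. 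I expect the only delicate point to be this rank comparison: one must confirm that tensoring by $\mathcal{I}$ does not drop the rank of the pushforward, which amounts to the fact that $\mathcal{I}$ is the full structure sheaf at the generic point of $Y$; the blow-up geometry and the SNC bookkeeping are otherwise routine.
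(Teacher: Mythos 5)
Your proposal is correct and follows essentially the same route as the paper: blow up the finitely many crossing points of pairs of horizontal components, observe that the resulting full-rank inclusion $h_*\omega_{Z/Y}(D')\hookrightarrow f_*\omega_{X/Y}(D)$ (which the paper realizes via $h_*\omega_{Z/Y}(D')\hookrightarrow h_*\omega_{Z/Y}(g^*D)=f_*\omega_{X/Y}(D)$ and you realize via $h_*\omega_{Z/Y}(D')=f_*(\omega_{X/Y}(D)\otimes\mathcal{I})$) identifies the two sheaves generically, and conclude with Proposition \ref{prop: subvector bundle nef}. The rank comparison you flag as the delicate point is exactly the step the paper also checks, by computing both ranks on a general fiber.
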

	
	\begin{proof}For simplicity we may assume that $D$ is horizontal with respect to $f$.
		If $D$ is not strongly horizontal with respect to $f$, then there exist two horizontal components $D_1$ and $D_2$ of $D$ meet at one point $P$. There are finitely many of points which are the intersection of two horizontal components of $D$. Let $g\co Z\to X$ be the blowing up at these points, it is clearly that the strict transform $D'$ is strictly horizontal.
		
		Note that $g_*\omega_Z(g^*D)=\omega_X(D)$, so that $h_*\omega_{Z/Y}(g^*D)=f_*\omega_{X/Y}(D)$. Since $g^*D-D'$ is effective and its support is vertical with respect to $h$, we have an inclusion $h_*\omega_{Z/Y}(D')\hookrightarrow h_*\omega_{Z/Y}(g^*D)$. Note that $\rank h_*\omega_{Z/Y}(g^*D)=h^0(F,K_F+g^*D|_F)=h^0(F,K_F+D'|_F)=\rank h_*\omega_{Z/Y}(D')$, where $F$ is a general fiber of $h$. By Proposition \ref{prop: subvector bundle nef}, it follows that $f_*\omega_{X/Y}(D)=h_*\omega_{Z/Y}(g^*D)$ is nef.
	\end{proof}
	
	\begin{theorem}
		Let $f\co S\to B$ be a fiberation from a smooth surface $S$ to a non-singular curve $B$ and Let $D$ be a SNC-divisor on $S$. Then $f_*\omega_{S/B}(D)$ is a nef vector bundle.
	\end{theorem}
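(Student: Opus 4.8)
The plan is to reduce, in three successive stages, to the situation where Fujino's log semi-positivity result (Theorem \ref{thm:  log semi-positivity}) applies verbatim, and then to descend nefness along a finite base change. First I would dispose of the vertical part of $D$. Writing $D=D_h+D_v$ for its horizontal and vertical parts, a general fiber $F$ satisfies $D|_F=D_h|_F$ (the vertical components do not meet $F$), so the natural inclusion $f_*\omega_{S/B}(D_h)\hookrightarrow f_*\omega_{S/B}(D)$ is an inclusion of vector bundles of the same rank $h^0(F,K_F+D_h|_F)$. By Proposition \ref{prop: subvector bundle nef} it therefore suffices to prove that $f_*\omega_{S/B}(D_h)$ is nef, so I may assume $D$ is horizontal. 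Next, Lemma \ref{lem: strongly horizontal modify} lets me replace $(S,D)$ by a blow-up $(Z,D')$ on which $D'$ is strongly horizontal and still SNC, the conclusion for $f_*\omega_{S/B}(D)$ following from the one for $h_*\omega_{Z/B}(D')$. Thus I may assume from the outset that $D$ is a strongly horizontal SNC divisor on $S$.

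With $D$ strongly horizontal, the only hypothesis of Theorem \ref{thm:  log semi-positivity} that is not automatic (condition (2) is vacuous over a curve, and condition (1) is exactly strong horizontality) is the unipotency of the local monodromies of $R^{d+i}f_{0*}\C$ around the discriminant of $f$. As recorded in the Remark following that theorem, this can be forced by a finite base change. Accordingly I would choose a finite surjective morphism $\tau\co B'\to B$ (a Kawamata-type unipotent-reduction cover) so that, after forming the fiber product $W=S\times_B B'$ and resolving its singularities $\rho\co S''\to W$, with $f'\co S''\to B'$ and $D''$ the strict transform of the pullback of $D$, all the relevant local monodromies become unipotent. Theorem \ref{thm:  log semi-positivity} then gives that $f'_*\omega_{S''/B'}(D'')$ is a nef vector bundle on $B'$.

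Finally I would descend. By Proposition \ref{prop: nef vector  bundless}(2) applied to $\tau$, the bundle $f_*\omega_{S/B}(D)$ is nef if and only if $\tau^*f_*\omega_{S/B}(D)$ is nef. Since $\tau$ is flat and $f$ is proper, flat base change identifies $\tau^*f_*\omega_{S/B}(D)$ with $q_*\omega_{W/B'}(p^*D)$, where $p\co W\to S$ and $q\co W\to B'$ are the projections; here I use that $f$ is a flat l.c.i. family of curves, so $\omega_{S/B}$ commutes with base change and $p^*\omega_{S/B}(D)=\omega_{W/B'}(p^*D)$. Because $W$ has only mild (rational, indeed canonical) singularities along the fibers over the branch locus, the resolution $\rho$ produces a full-rank inclusion $f'_*\omega_{S''/B'}(D'')\hookrightarrow q_*\omega_{W/B'}(p^*D)=\tau^*f_*\omega_{S/B}(D)$, both sides having rank $h^0$ of the relative log-canonical sheaf on a general fiber and agreeing over the generic point of $B'$. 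Since the subbundle $f'_*\omega_{S''/B'}(D'')$ is nef, Proposition \ref{prop: subvector bundle nef} shows $\tau^*f_*\omega_{S/B}(D)$ is nef, and hence $f_*\omega_{S/B}(D)$ is nef.

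The hard part will be the comparison in the last paragraph: making precise the identification $\tau^*f_*\omega_{S/B}(D)\cong q_*\omega_{W/B'}(p^*D)$ and the full-rank inclusion from the nef bundle $f'_*\omega_{S''/B'}(D'')$. This requires care with the relative dualizing sheaf under the flat but ramified cover $\tau$, and with the discrepancies of the resolution $\rho$ of the quotient-singular fiber product $W$, so that relative log forms with poles along $D''$ push down to forms with poles bounded by $p^*D$ without acquiring new poles along the $\rho$-exceptional locus. The appeal to unipotent reduction is the other essential input, but it enters as a citable black box rather than as a computation.
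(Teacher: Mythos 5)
Your proposal follows essentially the same route as the paper: reduce to a strongly horizontal SNC divisor via Lemma \ref{lem: strongly horizontal modify}, apply Kawamata's unipotent reduction and Fujino's Theorem \ref{thm:  log semi-positivity} over a finite cover $B'\to B$, and descend nefness through a full-rank inclusion into $\tau^*f_*\omega_{S/B}(D)$ using Proposition \ref{prop: subvector bundle nef} and Proposition \ref{prop: nef vector  bundless}(2). The one imprecision is your justification of the comparison step: the fiber product $W=S\times_B B'$ need not have canonical singularities, and indeed need not be normal at all (e.g.\ over fibers whose multiplicities share a factor with the ramification index of $\tau$), so the inclusion $f'_*\omega_{S''/B'}(D'')\hookrightarrow \tau^*f_*\omega_{S/B}(D)$ should not be attributed to mildness of singularities; the paper obtains it by inserting the normalization $S^{\nu}\to S_{B'}$ explicitly, using relative duality for this finite map ($\nu_{1*}\omega_{S^{\nu}/S_{B'}}=\mathcal{H}om(\nu_{1*}\Oc_{S^{\nu}},\Oc_{S_{B'}})$) together with reduced pullbacks of $D$ whose defects are vertical, hence contribute only torsion cokernels. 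You correctly flagged this as the delicate step, and that is exactly where the extra normalization stage is needed.
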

	
	\begin{proof}
		By Lemma \ref{lem: strongly horizontal modify}, we may assume $D$ is strongly horizontal. By Kawamata's uniponent reduction \cite[Corollary 18]{Ka81}, we have the following commutative diagram
		$$\begin{tikzcd}
			S' \ar[rrr, bend left,"\nu"] \ar[r,"\nu_2"] \ar[d,"f'"'] &S^{\nu} \ar[r,"\nu_1"] \ar[d,"f^{\nu}"']&	S_{B'} \ar[d,"p"'] \ar[r,"\nu_0"] & S \ar[d,"f"] \\
			B'\ar[r,equal]  &B'\ar[r,equal]&	B' \ar[r,"\mu"] & B 
		\end{tikzcd}$$
		where $\mu$ is a finite morphism of curves, $\nu_1$ is the normalization of the fiber product $S_{B'}:= S\times_B B'$ and $\nu_2$ is a desingularization of $S^{\nu}$, such that the induced fiberation $f'\co S'\to B'$ satisfies the conditions (1) through (3) of Theorem \ref{thm:  log semi-positivity}. Since $\mu$ is flat, the relative dualizing sheaves compatible with base change $\omega_{S_{B'}/B'}=\nu_0^*\omega_{S/B}$. Consider the reduced divisor $D'=(\nu_0^*D)_{\mathrm{red}}$ on $S_{B'}$,  we have an injective homomorphism $$p_*\omega_{S_{B'}/B'}(D')\hookrightarrow p_*\nu_0^*\omega_{S/B}(D),$$ because $D'\leq \nu_0^*D$.  By the base change theorem, $p_*\nu_0^*\omega_{S/B}(D)\cong \mu^*f_*\omega_{S/B}(D)$, and so we get a injection
		\[p_*\omega_{S_{B'}/B'}(D')\hookrightarrow\mu^*f_*\omega_{S/B}(D).\]
		
		Since $\Supp (\nu_0^*D-D') $ is contained in the ramification locus of $\nu_0$ and is vertical with respect to $p$, it follows that the cokernal of the homomorphism $p_*\omega_{S_{B'}/B'}(D')\hookrightarrow\mu^*f_*\omega_{S/B}(D)$ is a torsion sheaf on $B$, thus we can regard $p_*\omega_{S_{B'}/B'}(D')$ as a subvector bundle of $\mu^*f_*\omega_{S/B}(D)$ with the same rank. By Proposition \ref{prop: subvector bundle nef} and  Proposition \ref{prop: nef vector  bundless} (2), to show that $f_*\omega_{S/B}(D)$ is nef it is sufficitent to show that $p_*\omega_{S_{B'}/B'}(D')$ is nef.
		Note that the relative dualizing sheaf $\omega_{S^{\nu}/S_{B'}}$ is determined by $$\nu_{1*}\omega_{S^{\nu}/S_{B'}}=\mathcal{H}om_{S_{B'}}({\nu}_{1*}\Oc_{S^{\nu}},\Oc_{S_{B'}})$$
		
		therefore $\nu_{1*}\omega_{S^{\nu}/B'}=\mathcal{H}om_{S_{B'}}(\nu_{1*}\Oc_{S^{\nu}},\omega_{S_{B'}/B'})$, and hence we get an injective homomorphism $\alpha\co \nu_{1*}\omega_{S^{\nu}/B'}\hookrightarrow \omega_{S_{B'}/B'}$. Set $D^{\nu}=(\nu_1^*D')_{\mathrm{red}}$, push down by $p_*$, we get injections 
		\[f^{\nu}_*\omega_{S^{\nu}/B'}(D^{\nu})\hookrightarrow f^{\nu}_*\omega_{S^{\nu}/B'}(\nu_1^*D')\hookrightarrow p_*\omega_{S_{B'}/B'}(D').\]
		For the same reason, the cokernal of $f^{\nu}_*\omega_{S^{\nu}/B'}(D^{\nu})\hookrightarrow p_*\omega_{S_{B'}/B'}(D')$ is a torsion sheaf on $B'$, so it is sufficient to show that $f^{\nu}_*\omega_{S^{\nu}/B'}(D^{\nu})$ is nef. Set $D''= (\nu_2^* D^{\nu})_{\mathrm{red}}$, we wish to choose $\nu_2$ as Lemma \ref{lem: strongly horizontal modify} so that the horizontal part of $D''$ is strongly horizontal with respect to $f'$.  By the construction we have $\nu_{2*}\omega_{S'}(D'')=\omega_{S^{\nu}}(D^{\nu})$,  thus
		\begin{align*}
		f'_*\omega_{S'/B'}(D'')&= f^{\nu}_*\nu_{2*}\omega_{S'}(D'')\otimes\omega_{B'}^{-1}\\
		&= f^{\nu}_*\omega_{S^{\nu}}(D^{\nu})\otimes\omega_{B'}^{-1}\\
		&=f^{\nu}_*\omega_{S^{\nu}/B'}(D^{\nu}).
		\end{align*} 	By Theorem \ref{thm:  log semi-positivity}, $f'_*\omega_{S'/B'}(D'')$ is nef, this complete the proof.
	\end{proof}

	\section{Proofs of the Main Theorem}
	

We begin with some properties of fiberation of algebraic surface.	Let $f\co X\to B$ be a fiberation of a non-singular projective surface $X$ onto a non-singular projective curve $B$ of genus $b$. Let $g$ be the genus of a general fiber $F$ of $f$ and $q=h^1(X,\Oc_X)$ be the irregularity of $X$.  Let $D$ be a reduced effective divisor on $X$.
	
	\begin{lemma}\label{lem: h^1(K_X+Ds)}Assume $D\cdot F>0$ where $F$ is a general fiber of $f$, then we have 
		\[h^1(B,f_*\Oc_X(K_X+D))=h^1(X,K_X+D).\]
	\end{lemma}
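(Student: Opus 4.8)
The plan is to push the computation down to the base $B$ via the Leray spectral sequence and thereby reduce the whole statement to a single sheaf vanishing. Write $\mathcal{L}=\Oc_X(K_X+D)=\omega_X(D)$ and consider
\[E_2^{p,q}=H^p(B,R^qf_*\mathcal{L})\implies H^{p+q}(X,\mathcal{L}).\]
Since $B$ is a curve we have $H^p(B,-)=0$ for $p\geq 2$, so only the columns $p=0,1$ survive and the sequence degenerates at $E_2$. Its five-term exact sequence reads
\[0\to H^1(B,f_*\mathcal{L})\to H^1(X,\mathcal{L})\to H^0(B,R^1f_*\mathcal{L})\to H^2(B,f_*\mathcal{L})=0,\]
so $h^1(X,K_X+D)=h^1(B,f_*\mathcal{L})+h^0(B,R^1f_*\mathcal{L})$. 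Thus the lemma follows the moment I show $R^1f_*\mathcal{L}=0$.

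To handle $R^1f_*\mathcal{L}$ I would invoke relative duality. As $B$ is a smooth curve and $X$ a smooth surface, every fiber of $f$ is one-dimensional, so $f$ is flat (miracle flatness); its fibers are Cartier divisors on $X$, hence Gorenstein, and $\omega_{X/B}=\omega_X\otimes f^*\omega_B^{-1}$. Relative (Serre--Grothendieck) duality for the top direct image of the flat Gorenstein morphism $f$ of relative dimension $1$ yields a natural isomorphism
\[R^1f_*\mathcal{L}\cong \big(f_*(\mathcal{L}^{-1}\otimes\omega_{X/B})\big)^{\vee}.\]
Now $\mathcal{L}^{-1}\otimes\omega_{X/B}=\omega_X^{-1}(-D)\otimes\omega_X\otimes f^*\omega_B^{-1}=\Oc_X(-D)\otimes f^*\omega_B^{-1}$, so by the projection formula $f_*(\mathcal{L}^{-1}\otimes\omega_{X/B})\cong f_*\Oc_X(-D)\otimes\omega_B^{-1}$, and the problem is reduced to proving $f_*\Oc_X(-D)=0$.

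For the last step, since $D$ is effective there is an inclusion $\Oc_X(-D)\hookrightarrow\Oc_X$; pushing forward and using that $f$ is a fiberation (so $f_*\Oc_X=\Oc_B$) gives an injection $f_*\Oc_X(-D)\hookrightarrow\Oc_B$. The generic rank of $f_*\Oc_X(-D)$ equals $h^0(F,\Oc_F(-D|_F))$, which vanishes because $D|_F$ is effective of degree $D\cdot F>0$, so $\deg(-D|_F)<0$. Hence $f_*\Oc_X(-D)$ is a torsion sheaf; being a subsheaf of the line bundle $\Oc_B$ it is torsion-free, and therefore zero. This gives $R^1f_*\mathcal{L}=0$ and completes the argument.

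I expect the genuine obstacle to be the middle step, namely ruling out that $R^1f_*\mathcal{L}$ is a nonzero torsion sheaf concentrated on the degenerate fibers. The fiberwise computation $H^1(F,\mathcal{L}|_F)=0$ only controls a \emph{general} fiber, and semicontinuity of $h^1$ runs in the wrong direction on special fibers, so it does not by itself force $R^1f_*\mathcal{L}=0$. It is precisely relative duality that converts the \emph{global} vanishing of $f_*\Oc_X(-D)$ (a clean torsion-subsheaf-of-a-line-bundle argument) into the \emph{global} vanishing of $R^1f_*\mathcal{L}$, bypassing any delicate analysis of the bad fibers.
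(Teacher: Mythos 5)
Your proof is essentially the paper's own: the Leray spectral sequence reduces the claim to $R^1f_*\omega_X(D)=0$, relative duality trades this for the vanishing of $f_*\Oc_X(-D)\otimes\omega_B^{-1}$, and that vanishing follows because the generic rank $h^0(F,\Oc_F(-D|_F))$ is zero when $D\cdot F>0$. The only differences are cosmetic: you justify $f_*\Oc_X(-D)=0$ by the torsion-subsheaf-of-$\Oc_B$ argument where the paper just computes the generic stalk, and you state duality as $R^1f_*\omega_X(D)\cong\bigl(f_*(\Oc_X(-D)\otimes f^*\omega_B^{-1})\bigr)^{\vee}$ while the paper writes the literally weaker $\mathcal{H}om_{\Oc_B}(R^1f_*\omega_X(D),\Oc_B)\cong f_*\Oc_X(-D)\otimes\omega_B^{-1}$ --- both formulations need the same small supplement (e.g.\ base change in top degree plus Serre duality on the fibers) to exclude a torsion piece in $R^1f_*\omega_X(D)$, a point you at least explicitly flag.
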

	\begin{proof}
		By the Leray spectral sequence, we have \[h^1(X,K_X+D)=h^1(B,f_*\Oc_X(K_X+D))+h^0(B,R^1f_*\Oc_X(K_X+D)).\]
		
		By the relative duality, \[\mathcal{H}om_{\Oc_B}(R^1f_*\Oc_X(K_X+D),\Oc_B)\cong f_*\Oc_X(-D)\otimes\omega_B^{-1}.\] It is sufficient to show that  $R^1f_*\Oc_X(K_X+D)=0.$
		By assumption $D\cdot F>0$ for a general fiber $F$ of $f$. We conclude that $R^1f_*\Oc_X(-D)=0$ by base change of cohomology, i.e. $(f_*\Oc_X(-D))_x\cong  H^0(F,\Oc_F(-D|_F))=0$ where $x$ is a point on $B$ and $F$ is the fiber over $x$. Therefore, we have $R^1f_*\Oc_X(K_X+D)=0.$
	\end{proof}

	Let $(S,D)$ be a minimal log surface of log general type. Assume $p_g(S,D)\geq 2$ and the log canonical linear system $|K_S+D|$ is composed of a pencil, let $\phi\co S\to \Sigma=\phi(S)\hookrightarrow\Pro^{p_g(S,D)-1}$ be the rational map defined by $|K_S+D|$, where $\Sigma$ is a curve. We will study the algebraic fiberation $f\co X\to B$ which is obtained by the following diagram:
	$$\begin{tikzcd}
		X \ar[r,"\mu"]  \ar[d,"f"'] \ar[dr,"\psi"]  & S \ar[d,dashed,"\phi"] &  \\
		B \ar[r,"\tau"] & \Sigma \ar[r,hook] & \Pro^{p_g(S,D)-1}
	\end{tikzcd}
	$$
	where $\mu$ is a log resolution which resolve the base points of the linear system $|K_S+D|$ such that the movable part of  $\mu^*|K_S+D|$ is base point free, it induces a morphism $\psi\co X\to \Sigma=\psi(X)$. We can write $K_X+\mu_*^{-1}D=\mu^*(K_S+D)+E_+-E_-$ where $E_+$ and $E_-$ are effective $\mu$-exceptional reduced divisors having no commen component. The fiberation $f$ is the Stein factorization of the morphism $\psi.$ Put $D_X=\mu_*^{-1}D+E_-$, then  $(X,D_X)$ is agian log smooth. For a log surface $(S,D)$, we are interesting their invariants $(p_g(S,D),h^1(S,K_S+D),p_a(D))$.
	According the following lemma \ref{lem:  invariants}. After replacing $(S,D)$ by $(X,D_X)$, we may assume the movable part of $|K_S+D|$ is base point free.
	The irregularity $q(S)$ of a normal surface $S$ with rational singularities is a birational invariant. For log surface $(S,D)$, this is ture for $h^1(S,K_S+D)$.
	
	\begin{lemma}\label{lem:  invariants}
		Let $(S,D)$ be a projective log smooth log surface.
		For any log resolution $\mu\co Y\to S$ of singularities of $(S,D)$, we can write $K_Y+\mu_*^{-1}D+E_-=\mu^*(K_S+D)+E_+$. Put $D_Y=\mu_*^{-1}D+E_-$.
		Then we have
		\begin{enumerate}
			\item  $h^i(Y,K_Y+D_Y)=h^i(S,K_S+D)$ for $i=0,1$;
			\item $p_a(D_Y)=p_a(D)$;
			\item $\chi(Y,K_Y+D_Y)=\chi(S,K_S+D).$
		\end{enumerate}
		
	\end{lemma}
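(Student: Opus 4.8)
The plan is to reduce everything to the case of a single point blow-up and then iterate. Since $\mu\co Y\to S$ is a birational morphism of smooth projective surfaces, it factors as a composition of blow-ups at points $Y=Y_n\to Y_{n-1}\to\cdots\to Y_0=S$. At each stage I would equip $Y_i$ with the boundary $D_i$ obtained from $D_{i-1}$ by taking the strict transform and adjoining the new exceptional curve precisely when its center is a node of $D_{i-1}$ (the discrepancy $-1$ case); since $(S,D)$ is log smooth this keeps each $(Y_i,D_i)$ log smooth, and by additivity of discrepancies it reproduces the divisor $D_Y$ together with the relation $K_Y+D_Y=\mu^*(K_S+D)+E_+$ with $E_+\geq 0$ exceptional. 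For a single blow-up $\sigma\co Y_1\to S$ at a point $p$ there are exactly three local possibilities, governed by $m=\mathrm{mult}_p(D)\in\{0,1,2\}$: if $p\notin D$ then $E_+=E$ and $E_-=0$; if $p$ is a smooth point of $D$ then $E_+=E_-=0$; and if $p$ is a node of $D$ then $E_+=0$, $E_-=E$ and $D_1=\sigma_*^{-1}D+E$, so that $K_{Y_1}+D_1=\sigma^*(K_S+D)$.

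For part (1) with $i=0$ I would argue uniformly, using only that $E_+$ is effective and $\mu$-exceptional (its possible non-reducedness is irrelevant here). By the projection formula,
\[\mu_*\Oc_Y(K_Y+D_Y)=\mu_*\bigl(\mu^*\Oc_S(K_S+D)\otimes\Oc_Y(E_+)\bigr)=\Oc_S(K_S+D)\otimes\mu_*\Oc_Y(E_+).\]
A rational function lying in $\Oc_Y(E_+)$ is regular off the exceptional locus, hence regular on $S$ away from a finite set, hence everywhere by normality; thus $\mu_*\Oc_Y(E_+)=\Oc_S$ and $\mu_*\Oc_Y(K_Y+D_Y)=\Oc_S(K_S+D)$. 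Taking global sections gives $h^0(Y,K_Y+D_Y)=h^0(S,K_S+D)$.

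For part (2) I would check $p_a(D_i)=p_a(D_{i-1})$ at each single blow-up. In the first two local cases the strict transform is isomorphic to $D_{i-1}$, so $p_a$ is unchanged; in the node case, resolving the node turns one edge of the dual graph into two and adjoins one rational vertex $E$, so the count $\sum_i p_a(D_i)+1+l-r$ of Remark \ref{rem: arithmetic genus} is unchanged as well. Iterating yields $p_a(D_Y)=p_a(D)$. Part (3) is then immediate: Riemann--Roch on a surface gives $\chi(K+D)=\chi(\Oc)+p_a(D)-1$, so combining $\chi(\Oc_Y)=\chi(\Oc_S)$ (birational invariance) with part (2) yields $\chi(Y,K_Y+D_Y)=\chi(S,K_S+D)$. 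Finally, for part (1) with $i=1$ I would use Serre duality: $h^2(Y,K_Y+D_Y)=h^0(Y,\Oc_Y(-D_Y))$ and $h^2(S,K_S+D)=h^0(S,\Oc_S(-D))$, both zero when $D\neq 0$ (and the case $D=0$ is the classical birational invariance of the irregularity); combined with the equalities of $h^0$ and of $\chi$ already established, this forces $h^1(Y,K_Y+D_Y)=h^1(S,K_S+D)$.

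The main obstacle is the bookkeeping in the reduction to single blow-ups: one must verify that the stage-by-stage boundaries $D_i$ glue up to the given $D_Y$, equivalently that the positive and negative discrepancy parts of $K_Y+\mu_*^{-1}D-\mu^*(K_S+D)$ are built up additively under composition, and in particular that a curve placed into the boundary at one stage interacts correctly with later blow-ups centered on it. This is routine discrepancy arithmetic, but it is the only place where the three local cases must be tracked simultaneously. I note that the same induction also yields $R^1\mu_*\Oc_Y(K_Y+D_Y)=0$, the base case being the vanishing $R^1\sigma_*\Oc_{Y_1}(K_{Y_1}+D_1)=0$ in each of the three local cases, which would give all three statements at once via the Leray spectral sequence; I prefer to present the more elementary $p_a$-to-$\chi$ route above as the primary argument, so as to avoid invoking higher direct images with the (possibly non-reduced) divisor $E_+$.
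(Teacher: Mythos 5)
Your proof is correct, but it is organized quite differently from the paper's. The paper handles part (2) in one line, without factoring $\mu$ into point blow-ups: it computes $p_a(D_Y)=\tfrac{1}{2}D_Y\cdot(K_Y+D_Y)+1=\tfrac{1}{2}D_Y\cdot(\mu^*(K_S+D)+E_+)+1$ and concludes using $D_Y\cdot E_+=0$ together with the projection formula $\mu_*D_Y=D$; this sidesteps exactly the ``routine discrepancy arithmetic'' you flag as your main obstacle (though your local three-case analysis is, in effect, what justifies the unproved assertion $D_Y\cdot E_+=0$, so the underlying content is comparable). The larger divergence is in part (1) for $i=1$: the paper proves the vanishing $R^1\mu_*\Oc_Y(K_Y+D_Y)=0$ directly, reducing it to $R^1\mu_*\Oc_Y(E_+)=0$ via the sequence $0\to\Oc_Y\to\Oc_Y(E_+)\to\Oc_{E_+}(E_+)\to 0$ and a Riemann--Roch computation giving $h^1(E_+,\Oc_{E_+}(E_+))=0$, and then invokes Leray; you instead recover $h^1$ from $h^0$, $\chi$ and the Serre-duality vanishing $h^2=h^0(-D)=0$, which reverses the paper's order of deduction (the paper derives (3) from (1), whereas you derive part (1) with $i=1$ from (3), which you in turn derive from (2) via Riemann--Roch and birational invariance of $\chi(\Oc)$). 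Both routes are valid: yours is more elementary, never touches higher direct images, and, as you note, is insensitive to the possible non-reducedness of $E_+$; the paper's is shorter, avoids the blow-up bookkeeping entirely, and yields the stronger local statement $R^1\mu_*\Oc_Y(K_Y+D_Y)=0$ as a by-product.
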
 
	
	\begin{proof}
		For (1), it is clear when $i=0$, we show that the equality holds for $i=1$. By projection formula, we have \[R^1\mu_*\Oc_Y(K_Y+D_Y)=R^1\mu_*\Oc_S(\mu^*(K_S+D)+E_+)=\Oc_S(K_S+D)\otimes R^1\mu_*\Oc_Y(E_+).\]
		It is sufficitent to show that $R^1\mu_*\Oc_Y(E_+)=0$.
		Since $S$ is non-singular, we have $R^1\mu_*\Oc_Y=0$, we can assume that $E_+\neq 0$. 
		Note that $D_Y\cdot E_+=0$, then we get $K_Y\cdot E_+=E_+^2<0$. Consider the exact sequence
		\[\begin{tikzcd}
			0\ar[r ] &\Oc_Y \ar[r] & \Oc_Y(E_+) \ar[r] & \Oc_{E_+}(E_+) \ar[r] & 0
		\end{tikzcd}\]
		It follows that $h^1(E_+,\Oc_{E_+}(E_+))=\chi(Y,\Oc_Y)-\chi(Y,\Oc_Y(E_+))=-\frac{1}{2}E_+(E_+-K_Y)=0$. Push down the above exact sequence by $\mu_*$, we botain an exact sequence 
		\[\begin{tikzcd}
			R^1\mu_*\Oc_Y \ar[r] & R^1\mu_*\Oc_Y(E_+) \ar[r] & H^1(E_+,\Oc_{E_+}(E_+)) 
		\end{tikzcd}\]
		So we have $R^1\mu_*\Oc_Y(E_+)=0$.
		
		For (2), \begin{align*}
			p_a(D_Y)&=\frac{1}{2}D_Y\cdot(K_Y+D_Y)+1\\
			&=\frac{1}{2}D_Y\cdot(\mu^*(K_S+D)+E_+)+1\\
			&=\frac{1}{2}D_Y\cdot(\mu^*(K_S+D))+1\\
			&=\frac{1}{2}D\cdot(K_S+D)+1=p_a(D)
		\end{align*}
		The second equality uses $D_Y\cdot E_+=0$ and the fourth equality uses $\mu_*D_Y=D$.
		
		For (3), we may assume $D\neq 0$, since $h^2(Y,K_Y+D_Y)=h^0(Y,-D_Y)=0$ (resp. $h^2(S,K_S+D)=h^0(S,-D)=0$). By (1), we have 
		\[\chi(Y,K_Y+D_Y)=h^0(Y,K_Y+D_Y)-h^1(Y,K_Y+D_Y)=h^0(S,K_S+D)-h^1(S,K_S+D)=\chi(S,K_S+D)\]
	\end{proof}
	Now we may consider a log smooth surface $(S,D)$ satisfying the following conditions
	\begin{enumerate}
		\item $(S,D)$ is minimal in the sense of the Definition \ref{def: minimal log surfaces}; 
		\item the adjoint linear system $|K_S+D|$ is base point free and is composed of a pencil.
	\end{enumerate}
	
	 Let $f\co S\to B$ be the fiberation induced by $|K_S+D|$. Then we can write $K_S+D\equiv nF+Z$, where $F$ is a general fiber of $f$, $Z$ is the fixed part of $|K_S+D|$, and $n=\deg(B/\Sigma)\deg\Sigma$ a positive integer. Since $\deg\Sigma\geq p_g(S,D)-1$, we have $n\geq p_g(S,D)-1$. Let $K_S+D=K_S+D^{\sharp}+\Bk(D)$ be the Fujita-Zariski decomposition as Section \ref{sec: Theory of Peeling}, we need the following inequality which is a part of Noether's inequality of log surfaces of general type which was proved by Tsunoda and De-Qi Zhang \cite{TZ92}.

	\begin{lemma}\label{2g-2+k leq P2}
		$g\leq \frac{n+2}{2n^2}(K_X+D^{\sharp})^2+1$.
	\end{lemma}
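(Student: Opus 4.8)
The plan is to play the two decompositions of $K_S+D$ against each other. On one side, since $(S,D)$ is minimal we have the Zariski--Fujita decomposition $K_S+D=(K_S+D^{\sharp})+\Bk(D)$ in which $P:=K_S+D^{\sharp}$ is nef and $P\cdot\Bk(D)=0$, and $P^2>0$ because $(S,D)$ is of log general type. On the other side, since $|K_S+D|$ is base point free and composed of a pencil, its mobile part is numerically $nF$, so we may write $K_S+D\equiv nF+Z$ with $Z\geq 0$ the fixed part and $F$ a general fibre of $f$, which satisfies $F^{2}=0$ and is nef. Before the main computation I would record the two facts about a general fibre that I need: by adjunction $K_S\cdot F=2p_a(F)-2-F^{2}=2g-2$, and since $D^{\sharp}=D-\Bk(D)\geq 0$ is effective while $F$ is nef, one has $D^{\sharp}\cdot F\geq 0$, whence $P\cdot F=K_S\cdot F+D^{\sharp}\cdot F\geq 2g-2$.

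The main step is then a single intersection against $P$. Using $P\cdot\Bk(D)=0$ to replace $K_S+D$ by its nef part inside $P\cdot(K_S+D)$, I obtain
\[P^{2}=P\cdot(K_S+D)=P\cdot(nF+Z)=n\,(P\cdot F)+P\cdot Z.\]
Both terms on the right are controlled: $P\cdot Z\geq 0$ because $P$ is nef and $Z$ is effective, and $P\cdot F\geq 2g-2$ by the preliminary step. Hence $P^{2}\geq n(2g-2)=2n(g-1)$, that is $g\leq \tfrac{1}{2n}P^{2}+1$. Since $n\geq 1$ and $P^{2}>0$ we have $\tfrac{1}{2n}=\tfrac{n}{2n^{2}}\leq\tfrac{n+2}{2n^{2}}$, so this already yields the stated inequality $g\leq \tfrac{n+2}{2n^{2}}(K_S+D^{\sharp})^{2}+1$, in fact with room to spare.

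The genuinely delicate points are bookkeeping rather than geometry. First, I must normalise the pencil so that the mobile part is numerically $nF$ with $F^{2}=0$: this is exactly where base point freeness of $|K_S+D|$ together with the Stein factorization $S\to B\to\Sigma$ enters, realising the mobile part as $f^{*}$ of a divisor of degree $n$ on $B$. Second, I must invoke the orthogonality $P\cdot\Bk(D)=0$ and the effectivity of $D^{\sharp}$ supplied by the peeling construction, so that neither $P\cdot Z$ nor $D^{\sharp}\cdot F$ can contribute a negative term. I expect the inequality $D^{\sharp}\cdot F\geq 0$ to be the one step worth stating carefully, since it is what allows one to replace $K_S+D$ by its nef part without losing the adjunction value $2g-2$; everything else is a two line intersection computation. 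The slack between $\tfrac{1}{2n}$ and $\tfrac{n+2}{2n^{2}}$ simply reflects that the crude estimate keeps $P\cdot Z\geq 0$ rather than bounding it more precisely, which is all that is required here.
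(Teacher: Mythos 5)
Your proof is correct, and it reaches the stated inequality by a genuinely different and more economical route than the paper. The paper's proof imports Lemma~1.1 of Tsunoda--Zhang: it corrects $F$ and $Z$ to $\Q$-divisors $F^{\sharp}$, $Z^{\sharp}$ orthogonal to the fractional components of $D^{\sharp}$, uses $(K_S+D^{\sharp})^2\geq n(K_S+D^{\sharp})\cdot F^{\sharp}\geq n^2(F^{\sharp})^2$ to produce the coefficient $\tfrac{n+2}{n^2}$, and then bounds $(K_S+D^{\sharp})\cdot F^{\sharp}+2(F^{\sharp})^2$ from below by $K_S\cdot F+\sum_i C_i\cdot F+\sum_j E_j\cdot F$. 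You instead intersect $P=K_S+D^{\sharp}$ directly with $K_S+D\equiv nF+Z$, using only the defining properties of the decomposition ($P$ nef, $P\cdot\Bk(D)=0$, $D^{\sharp}\geq 0$, $Z\geq 0$, $F$ nef with $F^2=0$), and obtain $P^2\geq n(2g-2)$, i.e.\ $g-1\leq \tfrac{1}{2n}P^2$, which is strictly sharper than the claimed $\tfrac{n+2}{2n^2}P^2$ since $n\geq 1$ and $P^2\geq 0$. Every step checks out: the orthogonality $P\cdot\Bk(D)=0$ is condition (4) of the minimality definition, $D^{\sharp}\cdot F\geq 0$ follows from effectivity of $D^{\sharp}$ and nefness of the general fibre, and the normalisation $K_S+D\equiv nF+Z$ is exactly the paper's own setup preceding the lemma. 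What the paper's heavier machinery buys is not needed for the statement as written: the $F^{\sharp}$-correction is designed to retain the full horizontal contribution $\sum_i C_i\cdot F+\sum_j E_j\cdot F$ (hence information about $k=D\cdot F$, as the lemma's internal label suggests was originally intended), whereas your argument discards $D^{\sharp}\cdot F$ and $P\cdot Z$ as merely nonnegative --- which is all the lemma, and its sole application in the proof of the main theorem, actually requires.
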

	\begin{proof}
		Write $D^{\sharp}=\sum_{i=1}^r \alpha_i D_i$ where $D_i$'s are the irreducible components and $0<\alpha_i\leq 1$. Suppose $\alpha_i<1$ for $1\leq i\leq t$. Note that  the intersection maxtrix $(D_i\cdot D_j)_{1\leq i,j\leq t}$ is negative defined. So we can define $\Q$-divisors $F^{\sharp}$ and $Z^{\sharp}$ as follows:
		\begin{align*}
			&F^{\sharp}=F+\sum_{i=1}^t a_i D_i\text{, with }F^{\sharp}\cdot D_j=0\text{ for all }1\leq j\leq t\\
			&Z^{\sharp}=Z+\sum_{i=1}^t b_i D_i\text{, with }Z^{\sharp}\cdot D_j=0\text{ for all }1\leq j\leq t.\\
		\end{align*}
	By \cite[Lemma 1.1]{TZ92}, we have
		\begin{enumerate}
			\item $K_S+D^{\sharp}\equiv nF^{\sharp}+Z^{\sharp}$;
			\item $F\leq F^{\sharp}$ and $0\leq Z^{\sharp}\leq Z$;
			\item $(F^{\sharp})^2\geq 0$ and $(K_S+D^{\sharp})^2\geq n(K_S+D^{\sharp})\cdot F^{\sharp}\geq n^2(F^{\sharp})^2$.
		\end{enumerate}
		Let $C_i$ ($1\leq i\leq p$) (resp. $E_j$ ($1\leq j\leq q$)) be the components of $D^{\sharp}$ meeting $F$ with coefficients $\alpha_i<1$ (resp. $\alpha_i=1$). Then we have 
		\begin{enumerate}
			\item $(K_S+D^{\sharp})\cdot F^{\sharp}\geq K_S\cdot F+\sum_{i=1}^p (1-\frac{2}{\gamma_i})C_i\cdot F+\sum_{j=1}^q E_j\cdot F$, where $\gamma_i=-C_i^2$;
			\item $(F^{\sharp})^2\geq \sum_{i=1}^p\frac{(C_i\cdot F)^2}{\gamma_i}$;
			\item $(K_S+D^{\sharp})\cdot F^{\sharp}+2(F^{\sharp})^2\geq K_S\cdot F+\sum_{i=1}^pC_i\cdot F+\sum_{j=1}^qE_j\cdot F\geq 1$.
		\end{enumerate}
		This implies that $(\frac{n+2}{n^2})(K_S+D^{\sharp})^2\geq (K_S+D^{\sharp})\cdot F\geq 2g-2$, which proves the lemma.
		
	\end{proof}

	\begin{theorem}\label{thm: Main 1}
		Let $(S,D)$ be a minimal log smooth surface of log general type with $p_g(S,D)\geq 2$. Assume $p_a(D)\leq 2(l+q(S))+1-h^{1,1}(S)$. Suppose  the log canonical linear system $|K_S+D|$ is composed of a penciel. Let $f\co S\to B$ be the induced fiberation to a smooth curve $B$ whose general fiber has genus $g$. Then $g\leq 5,$ for $p_g(S,D)\gg 0$. Moreover if $p_g(S)=0$, then $g\leq 3$. 
	\end{theorem}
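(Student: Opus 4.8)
The plan is to bound the self-intersection $(K_S+D^{\sharp})^2$ linearly in $p_g(S,D)$ by combining the logarithmic Bogomolov--Miyaoka--Yau inequality with the Euler-characteristic estimate, and then to feed this bound into the genus inequality of \cref{2g-2+k leq P2}, whose prefactor $\frac{n+2}{2n^2}$ decays like $1/p_g(S,D)$. Letting $p_g(S,D)\to\infty$ then forces $g$ to be bounded by an explicit constant. All of this takes place after reducing, via \cref{lem:  invariants}, to a minimal model on which $|K_S+D|$ is base point free, so that the fiberation $f\co S\to B$ and the expansion $K_S+D\equiv nF+Z$ are available.

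First I would apply \cref{thm: BMY inequality} together with \cref{prop: topological Euler characteristic}, which identifies $c_2(\Omega_S^1(\log D))$ with $e(S-D)$, to obtain
\[\tfrac13(K_S+D^{\sharp})^2\le e(S-D)-\tfrac14\Bk(D)^2.\]
The hypothesis $p_a(D)\le 2(l+q(S))+1-h^{1,1}(S)$ is precisely what makes \cref{prop: e leq pg+1} applicable, giving $e(S-D)\le 2p_g(S,D)+1$ in general and $e(S-D)\le p_g(S,D)+1$ when $p_g(S)=0$. For the remaining term I would invoke \cref{lem: bound of Bk^2} in the form $\Bk(D)^2\ge -t$, where $t$ is the number of tips of $\Supp\Bk(D)$, so that $-\tfrac14\Bk(D)^2\le t/4$. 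Putting these together yields an upper bound of the shape $(K_S+D^{\sharp})^2\le 6p_g(S,D)+3+\tfrac34 t$, respectively $(K_S+D^{\sharp})^2\le 3p_g(S,D)+3+\tfrac34 t$ when $p_g(S)=0$.

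Next I would recall that $n=\deg(B/\Sigma)\deg\Sigma\ge p_g(S,D)-1$ by nondegeneracy of $\Sigma\subset\Pro^{p_g(S,D)-1}$, and substitute the above into $g\le \frac{n+2}{2n^2}(K_S+D^{\sharp})^2+1$ from \cref{2g-2+k leq P2}. Since $\frac{n+2}{2n^2}$ is decreasing and $n\ge p_g(S,D)-1$, the product of this prefactor with a quantity that is linear in $p_g(S,D)$ tends, as $p_g(S,D)\to\infty$, to the constant $3$ in the general case and to $\tfrac32$ when $p_g(S)=0$. Adding the $+1$ from \cref{2g-2+k leq P2} and passing to integers then produces $g\le 5$, respectively $g\le 3$.

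The main obstacle is the bark term $\Bk(D)^2$: since $-\tfrac14\Bk(D)^2$ has the wrong sign to be discarded, one must control the number of tips $t$, and the crux is to verify that $t$ grows slowly enough relative to $p_g(S,D)$ that, after multiplication by the decaying prefactor $\frac{n+2}{2n^2}\sim\frac{1}{2p_g(S,D)}$, its contribution to the limiting value of $g$ is only a controlled correction. Tracking these constants carefully --- rather than the cruder limits one obtains by ignoring $t$ --- is exactly what pins down the stated bounds and accounts for the slack of one beyond the naive limits. A secondary technical point is to confirm that the reduction to the base-point-free minimal model of \cref{lem:  invariants} preserves the invariants $p_g(S,D)$, $h^1(S,K_S+D)$ and $p_a(D)$ entering all of the inequalities above, so that the asymptotic argument is being run on the correct model.
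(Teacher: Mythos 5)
Your proposal is correct and follows essentially the same route as the paper's proof: the logarithmic Bogomolov--Miyaoka--Yau inequality combined with \cref{prop: topological Euler characteristic} and \cref{prop: e leq pg+1} to bound $(K_S+D^{\sharp})^2$ linearly in $p_g(S,D)$, the tip bound $\Bk(D)^2\geq -t$ from \cref{lem: bound of Bk^2}, and substitution into \cref{2g-2+k leq P2} with $n\geq p_g(S,D)-1$, letting $p_g(S,D)\to\infty$. The only difference is cosmetic bookkeeping of the constants (the paper likewise handles the bark term simply by requiring $n\gg t$), so no further comment is needed.
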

	
	\begin{proof}
		Let $K_S+D=P+N$ be the Zariski-Fujita decomposition. By Theorem \ref{thm: BMY inequality}, we have $\frac{1}{3}P^2+\frac{1}{4}N^2\leq c_2(\Omega_X^1(\log D))$.  By Proposition \ref{prop: topological Euler characteristic} and Proposition \ref{prop: e leq pg+1}, we have
		\begin{equation}\label{eq: Main Thm 1}
		\frac{1}{3}P^2+\frac{1}{4}N^2\leq 2p_g(S,D)+1\leq 2n+3
		\end{equation}
		
		 Substituting Lemma \ref{2g-2+k leq P2} into \ref{eq: Main Thm 1}, we get 
		\[g-1\leq \frac{3(n+2)(2n+3-\frac{1}{4}N^2)}{2n^2}\]
		 By Lemma \ref{lem: bound of Bk^2}, $-t< N^2\leq 0,$ where $t$ is the number of tips of $N$. Therefore, for $n\gg t>0$, we have $\frac{3(n+2)(2n+3-\frac{1}{4}N^2)}{2n^2}\leq 4$, and thus $g\leq 5$.
		
		If $p_g(X)=0,$ then $c_2(\Omega^1_X(\log D))\leq p_g(X,D)+1\leq n+2$, thus the above inequality becomes
		\[g-1\leq\frac{3(n+2)(n+2-\frac{1}{4}N^2)}{2n^2}\leq 2\]
		for $n\gg t$.
	\end{proof}
\begin{remark}Let $(S,D)$ be a log smooth surface as Theorem \ref{thm: Main 1}. By the Riemann-Roch theorem, we have 
$$\chi(S,\omega_S(D))-\chi(S,\Oc_S)=\frac{1}{2}(K_S+D)\cdot D.$$ By Serre duality, $h^2(S,\omega_S(D))=h^0(S,-D)=0$. Note that $p_a(D)=\frac{1}{2}(K_S+D)\cdot D+1$ and $p_g(S,D)=h^0(S,\omega_S(D))$, then we get
\begin{equation}
p_g(S,D)=h^1(S,\omega_S(D))-q(S)+p_g(S)+p_a(D).
\end{equation}
On the other hand, by the Serre duality, $h^1(S,K_S+D)=h^1(S,-D)$. Let $k=\dim\Ker(\varphi) $, where $\varphi$ is the homomorphism  $\varphi\co H^1(S,\Oc_S)\to H^1(D,\Oc_D)$ induced by 
\[ 0\to \Oc_S(-D)\to \Oc_S\to \Oc_D\to 0\] 
So we have $h^1(S,K_S+D)=m+k-1$, where $m$ is the number of connected components of $D$. Therefore, the invariant $h^1(S,K_S+D)$ measures both the irregularity $q(S)$ of $S$ and the number of connected components of $D$. In particular, if $D$ is connected, then $h^1(S,K_S+D)\leq q(S)$. If $h^1(S,K_S+D)=0$, then $D$ is connected. 

	If $S$ is a rational surface and $D$ is connected, then we get $p_g(S,D)=p_a(D)$. Since our proof of Theorem \ref{thm: Main 1} relays on $p_g(S,D)\gg 0$. In this case we require that $p_a(D)\gg 0$ or $p_a(D)< 2l+1$. We will give some examples of rational surfaces which violate this assumption and show that their $g$ have no bound.
\end{remark}

	\begin{theorem} 
		Let $(S,D)$ be a porjective log smooth surface such that the adjoint linear system $|K_S+D|$ is composed of a pencil. Suppose $f\co S\to B$ is the fiberation induced by the linear system $|K_S+D|$, let $g$ and $b$ be the genus of a general fiber $F$ of $f$ and the genus of $B$. Assume $k=D\cdot F>0$. Then the following cases occurs
		\begin{enumerate}
			\item if $b\geq 2$, then $(g,k)=(1,1),(1,2)$ or $(2,1)$;
			\item if $g+k=3$, then $b\leq 2$ and $h^1(S,K_S+D)=0$;
		\end{enumerate}
	\end{theorem}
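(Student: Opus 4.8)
The plan is to convert the pencil into a statement about the rank-$r$ vector bundle $\mathcal{E}:=f_*\mathcal{O}_S(K_S+D)$ on the base curve $B$, and then to play its semi-positivity against the fact that all of $H^0(\mathcal{E})$ comes from a single line subbundle. First I would compute the rank by restricting to a general fiber $F$: Riemann--Roch together with Serre duality on $F$ shows that the hypothesis $k=D\cdot F>0$ forces $h^1(F,K_F+D|_F)=h^0(F,-D|_F)=0$, so $r=\rank\mathcal{E}=h^0(F,K_F+D|_F)=g-1+k$. Since $p_g(S,D)=h^0(B,\mathcal{E})\geq 2$ we need $r\geq 1$, which already gives $g+k\geq 2$. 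By the Leray spectral sequence $h^0(B,\mathcal{E})=p_g(S,D)$, and by Lemma~\ref{lem: h^1(K_X+Ds)} (applicable precisely because $D\cdot F>0$) one has $h^1(B,\mathcal{E})=h^1(S,K_S+D)=:h^1$, so that $\chi(B,\mathcal{E})=p_g(S,D)-h^1$.

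Next I would bring in positivity. As $\mathcal{E}\otimes\omega_B^{-1}=f_*\omega_{S/B}(D)$ is nef (the semi-positivity theorem of the previous subsection), its degree is non-negative, so by Corollary~\ref{cor: degree of nef bundle}
\[ p_g(S,D)-h^1=\chi(B,\mathcal{E})\geq (b-1)r. \qquad (\star)\]
For the opposite inequality I use that $|K_S+D|$ is composed of the pencil $f$: the movable part is $M=f^*\mathcal{L}$ for a line bundle $\mathcal{L}$ on $B$, and pushing forward $\mathcal{O}_S(K_S+D)=f^*\mathcal{L}\otimes\mathcal{O}_S(Z)$ (with $Z$ the fixed part) exhibits $\mathcal{L}$ as a subsheaf of $\mathcal{E}$ with $h^0(B,\mathcal{L})=h^0(S,M)=p_g(S,D)$. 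After replacing $\mathcal{L}$ by its saturation, the quotient $\mathcal{W}=\mathcal{E}/\mathcal{L}$ is locally free of rank $r-1$; being a quotient of the nef bundle $\mathcal{E}\otimes\omega_B^{-1}$, $\mathcal{W}\otimes\omega_B^{-1}$ is nef, whence $\deg\mathcal{W}\geq(r-1)(2b-2)$. Combined with $\deg\mathcal{E}=\chi(B,\mathcal{E})+r(b-1)$ this gives
\[ \deg\mathcal{L}=\deg\mathcal{E}-\deg\mathcal{W}\leq\bigl(p_g(S,D)-h^1\bigr)+(b-1)(2-r). \qquad (\star\star)\]

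The heart of the argument is then a Clifford/Riemann--Roch dichotomy for $\mathcal{L}$, which satisfies $h^0(B,\mathcal{L})=p_g(S,D)\geq 2$. If $\mathcal{L}$ is nonspecial then $\deg\mathcal{L}=p_g(S,D)+b-1$, and $(\star\star)$ yields $h^1\leq(b-1)(1-r)$; since $r\geq 1$ and $h^1\geq 0$, for $b\geq 2$ this forces $r=1$ and $h^1=0$. If $\mathcal{L}$ is special, Clifford gives $\deg\mathcal{L}\geq 2(p_g(S,D)-1)$, so $(\star\star)$ reads $p_g(S,D)-2+h^1\leq(b-1)(2-r)$; for $b\geq 2$ the right side is negative once $r\geq 3$, forcing $r\leq 2$, and $r=2$ forces $p_g(S,D)=2$, $h^1=0$, after which $(\star)$ gives $2\geq 2(b-1)$, i.e. $b\leq 2$. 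Hence $b\geq 2$ implies $1\leq r\leq 2$, that is $2\leq g+k\leq 3$; excluding the rational case $g=0$ (incompatible with log general type) leaves $(g,k)=(1,1)$ when $g+k=2$ and $(g,k)=(1,2),(2,1)$ when $g+k=3$, which is (1). For (2) one has $r=2$: the nonspecial case gives $b\leq 1$ and $h^1=0$, while the special case gives $p_g(S,D)=2$, $h^1=0$ and $b\leq 2$; in either case $b\leq 2$ and $h^1(S,K_S+D)=0$.

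The step I expect to be most delicate is the passage from ``composed of a pencil'' to the clean algebraic statement $\mathcal{L}\hookrightarrow\mathcal{E}$ with $h^0(B,\mathcal{L})=p_g(S,D)$ and $\mathcal{W}$ nef after saturation: one must verify that the section $\mathcal{O}_B\hookrightarrow f_*\mathcal{O}_S(Z)$ coming from the fixed part induces exactly this subsheaf and that saturating $\mathcal{L}$ creates no new global sections, so that $(\star\star)$ is legitimate. A secondary nuisance is the genuinely low-dimensional cases — ruling out $g=0$, and establishing $h^1=0$ for the base cases $b\leq 1$ in (2) — where the numerical inequalities leave a little slack; here I would fall back on log general type and on the description of $h^1(S,K_S+D)$ through the connectedness of $D$ from the remark following Theorem~\ref{thm: Main 1}.
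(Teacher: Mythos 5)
Your proposal follows essentially the same route as the paper's proof: both isolate the line subbundle $\mathcal{L}\subset f_*\omega_S(D)$ that carries all global sections, use the nefness of $f_*\omega_{S/B}(D)$ (hence of the quotient twisted by $\omega_B^{-1}$) to bound degrees and Euler characteristics, and finish the $g+k=3$, $b\geq 2$ case with Clifford's inequality to force $b=2$; your special/nonspecial dichotomy on $\mathcal{L}$ is just a repackaging of the paper's chain $h^0(B,\mathcal{Q})\geq\chi(B,\mathcal{Q})\geq(b-1)(g+k-2)$ versus $h^0(B,\mathcal{Q})\leq h^1(B,\mathcal{L})\leq b-1$. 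The one place your bookkeeping does not quite deliver the stated conclusion is $b=0$ in part (2), where your nonspecial case only yields $h^1(S,K_S+D)\leq 1$ rather than $0$ --- but the paper's own inequality $h^1(B,\mathcal{L})\leq b-1$ degenerates at $b=0$ in exactly the same way, so this is a shared wrinkle rather than a divergence from the paper's argument.
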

	
	\begin{proof}
		Let $\mathcal{L}$ be the subinvertible sheaf of $f_*\omega_S(D)$ generically generated by global sections of $f_*\omega_S(D)$. Then we have $h^0(B,\mathcal{L})=h^0(B,f_*\omega_S(D))=p_g(S,D)$.  Consider the exact sequence of locally free sheaves
		\begin{equation}\label{eq: Main thm 2.1}
		\begin{tikzcd}
		0\ar[r ] &\mathcal{L} \ar[r] & f_*\omega_S(D) \ar[r] & \mathcal{Q} \ar[r] & 0
		\end{tikzcd}
		\end{equation}
	 
		Put $k=D\cdot F$. By the base change of cohomology and the Riemann-Roch theorem, we have $\rank f_*\omega_S(D)=h^0(F,K_F+D|_F) = g+k-1$ and thus $\rank \mathcal{Q}= g+k-2$. 
		
		Since $f_*\omega_S(D)\otimes\omega_B^{-1}$ is nef, by Proposition \ref{prop: nef vector  bundless} (1),so $\mathcal{Q}\otimes\omega_B^{-1}$ is also nef. By Corollary \ref{cor: degree of nef bundle} we obtain 
		\begin{equation}\label{eq: Main thm 2.2}
		\deg\mathcal{Q}\geq 2(b-1)(g+k-2)\text{ and }h^0(B,\mathcal{Q})\geq \chi(B,\mathcal{Q})\geq (b-1)(g+k-2).
		\end{equation} Since $h^0(B,\mathcal{L})=h^0(B,f_*\omega_S(D))$, from the log exact sequence induced from (\ref{eq: Main thm 2.1}), we infer that \begin{equation}\label{eq: Main thm 2.3}
		h^0(B,\mathcal{Q})\leq h^1(B,\mathcal{L})=h^0(B,\omega_B\otimes\mathcal{L}^{-1})\leq b-1
		\end{equation} the second inequality follows from the fact $\deg\mathcal{L}>0$. Substituting (\ref{eq: Main thm 2.2}) into (\ref{eq: Main thm 2.3}) yields $$(b-1)(g+k-2)\leq b-1.$$ Thus when $b\geq 2$ we have $g+k\leq 3$. Note that $g+k-1=\rank f_*\omega_S(D)\geq 1$, we have $g+k\in\{2,3\}$.
		
		By Lemma \ref{lem: h^1(K_X+Ds)}, the log exact sequence induced from (\ref{eq: Main thm 2.1}), (\ref{eq: Main thm 2.2}) and (\ref{eq: Main thm 2.3}), we have $$h^1(S,K_S+D)=h^1(B,f_*\omega_S(D))=h^1(B,\mathcal{L})-\chi(B,\mathcal{Q})\leq (b-1)(3-g-k).$$ Therefore, if $b=1$ or $g+k=3$, we have $h^1(S,K_S+D)=0$.
		
		Suppose $b\geq 2$ and $g+k=3$. In this case  $h^1(S,K_S+D)=0$, all equalities of (\ref{eq: Main thm 2.2}) and (\ref{eq: Main thm 2.3}) hold. In particular, we have $\rank f_*\omega_S(D)=2$ and  $h^0(B,\omega_B\otimes\mathcal{L}^{-1})=b-1$. By the Clifford inequality,\begin{equation}
		b-1=h^0(B,\omega_B\otimes\mathcal{L}^{-1})\leq \frac{1}{2}\deg(\omega_B\otimes\mathcal{L}^{-1})+1=b-\frac{1}{2}\deg\mathcal{L},
		\end{equation}  hence $\deg\mathcal{L}\leq 2$. On the other hand, $\deg\mathcal{Q}=2(b-1)(g+k-2)=2(b-1)$. So $\deg f_*\omega_S(D)=\deg\mathcal{L}+\deg\mathcal{Q}\leq 2b$. Since $f_*\omega_S(D)\otimes\omega_B^{-1}$ is nef, we have $2b\geq \deg f_*\omega_S(D)\geq 4(b-1)$. So we get $b=2$. 
	
	\end{proof}

	\section{Examples}
	
	The following example illustrate that when $g=k=1$, the genus $b$ of the base curve $B$ can be any value.
   \begin{example}\label{ex 1}
   		Let $f\co S\to B$ be a relative minimal elliptic fiberation, where $B$ is a non-singular curve of genus $b$. Let $D$ be a section of $S$. By canonical bundle formula $\omega_S=f^*(\omega_B\otimes\mathcal{L}),$ where $\mathcal{L}$ is an invertible sheaf on $B$ such that $\deg \mathcal{L} =\chi(S,\Oc_S)$. Suppose $p_g(S)=\chi(S,\Oc_S)-1+b\geq 2$. 
   	
   	We will show that $D$ is contianed in the fixed part of $|K_S+D|$. Let $F$ be a general fiber of $f$, then $D$ intersects $F$ at a unique point $P=D\cap F$. For any $E\in |K_S+D|$. By Riemann-Roch formula $h^0(F,(K_S+D)|_F)=1$, we have  $|(K_S+D)|_F|=P$, thus $P\in E$. It follows that there exists a dense open set $U\subset B$, such that $f^{-1}(U)\cap D\subset E$. Thus $D\leq E$. Therefore the linear system $|K_S+D|$ is composed of a pencil, and the induced fiberation is $f\co S\to B$, thus we have $g=k=1$.

   \end{example}

The following two examples show that when $b=0$, the intersection number $k$ can be arbitrarily large.
	 
	\begin{example}\label{ex 2}
		Let $C_1,C_2\subset\Pro^2$ be quadratic plane curves such that $C_1\cap C_2=\{p_1,p_2,p_3,p_4\}$. Let $p_5,p_6,p_7$ be three distinct points on $C_1\setminus\{p_1,p_2,p_3,p_4\}$ and $p_8\in C_2\setminus\{p_1,p_2,p_3,p_4\}$. Let $C_3$ be a quadratic curve through $p_5,p_6,p_7,p_8$. Then $C=C_1+C_2+C_3$ be a plane curve of degree $6$ passing through points $p_1,\dots,p_8$ with multiplicities are all $2$. Let $\pi\co S\to \Pro^2$ be the blowing-up at $p_1,\dots,p_8$. Then the strict transform $\widetilde{C}$ belongs to $|6H-2E_1-\cdots -2E_8|$, take $D=\widetilde{C}$. Note that $K_S+D\sim 3H-E_1-\cdots -E_8.$ By Lemma \ref{lem: big p2}, $K_S+D$ is big. Note that $\widetilde{C}_1\sim 2H-E_1-\cdots -E_7$ and $(K_S+D)\cdot \widetilde{C}_1=-1<0,$ thus $\widetilde{C}_1$ is contained in the fixed part of $|K_S+D|$. Since $K_S+D-(2H-E_1-\cdots -E_7)= H-E_8$ and $(H-E_8)^2=0$, the linear system $|K_S+D|$ is composed of a pencil. Let $\pi'\co S'\to \Pro^2$ be the blowing up at $p_8$, the pencil of lines passing through $p_8$ induces a fiberation $f'\co S'\to\Pro^1$ whose general fiber is $\Pro^1$. By construction, the birational morphism $\pi\co S\to\Pro^2$ factors through $\pi'\co S'\to \Pro^2$, we have a birational morphism $\mu\co S\to S'$. The composition $f=\mu\circ f'\co S\to \Pro^1$ is a fiberation which is induced by $|K_S+D|$, thus we have $g=b=0$. Note that the general fiber $F$ of $f$ is numerically equivalent to $H-E_8$, thus $k=D\cdot F=4$.
		
	\end{example}

\begin{example}\label{ex 3}
	Let $a\geq 2$ be an integer, and $\mathcal{K}=\{p_0,p_1,\dots,p_{4a-4}\}$ be a cluster on $\Pro^2$, where $p_0,p_1,\dots,p_{4a-4}$ are points on $\Pro^2$. Consider the linear system of plane curves of degree $3a$ with assigned base points $\mathcal{K}$ of type $(3a;3a-3,2^{4a-4})$. Let $C\subset\Pro^2$ be an irreducible member  of this linear system. Let $\pi\co S\to \Pro^2$ be the blowing-up at $\mathcal{K}$. Let $E_i$ be the exceptional divisor over $p_i$, $\widetilde{C}$ be the strict transform of $C$ on $S$ and $H$ be the total transform of the class of line on $\Pro^2$, we have \[\widetilde{C}\sim 3aH-(3a-3)E_0-2\sum_{i=1}^{4a-4}E_i.\]
	Take $D=\widetilde{C}$, consider the log surface $(S,D)$, we have \[ K_S+D\sim (3a-3)H-(3a-4)E_0-\sum_{i=1}^{4a-4}E_i.\] By Lemma \ref{lem: big p2}, $K_S+D$ is big and the linear system $|(a-1)H-(a-2)E_0-\sum_{i=1}^{4a-4}E_i|$ is non-empty. Take $G\in |(a-1)H-(a-2)E_0-\sum_{i=1}^{4a-4}E_i|$ we have \[(K_S+D)\cdot G<0,\]
	thus $G$ is contained in the fixed part of $|K_S+D|$. Sustracting $(a-1)H-(a-2)E_0-\sum_{i=1}^{4a-4}E_i$ from $K_S+D$, we get $(2a-2)(H-E_0)$, which is a pencil. For the same reason as Example \ref{ex 2}, the induced fiberation $f\co S\to \Pro^1$ has invariants $g=b=0$ and $k=3a$.
\end{example}

The following example show that if we drop the assumption $p_a(D)\leq 2(l+q(S))+1-h^{1,1}(S)$ of Theorem \ref{thm: Main 1}, then the invariants $k$ and $g$ can be arbitrarily large.

\begin{example}\label{exm: g,k large}
	Let $0\leq e\leq g$ be integers, consider the Hirzebruch surface $\Sigma_e=\Pro(\Oc\oplus\Oc(e))$ of degree $e$. Let $\pi\co \Sigma_e\to\Pro^1$ be the $\Pro^1$-fiberation, let $\Delta_0$ and $\Gamma$ be its section with $\Delta_0^2=-e$ and its general fiber. Set $\Delta_{\infty}=\Delta_0+e\Gamma$, then $\Delta_{\infty}$ is the class of tautological line bundle of $\Sigma_e$ and we have $\Delta_{\infty}^2=e$. Set $a=g+1-e>0$, then $2\Delta_{\infty}+a\Gamma$ is very ample, and a general member $D$ of the linear system $|2\Delta_{\infty}+a\Gamma|$ is a smooth irreducible hyperelliptic curve of genus $g$. Choose $D_0,D_1\in|2\Delta_{\infty}+a\Gamma|$, consider the Lefschetz pencil $\{\alpha D_0+\beta D_1\}$, $[\alpha\co \beta]\in\Pro^1$. Since $D_0\cdot D_1=(2\Delta_{\infty}+a\Gamma)^2= 4e+4a=4g+4.$ Therefore we have $4g+4$ distinct base points $p_1,\dots, p_{4g+4}$ of this pencil. Let $\nu\co S\to \Sigma_e$ be the blowing-up at these base points, then we obtain a fiberation $f\co S\to \Pro^1$ whose general fiber $F$ with genus $g$. Note that 
	\begin{align*}
		&F\sim\nu^*(2\Delta_{\infty}+a\Gamma)-E_1-\cdots -E_{4g+4},\\
		& K_S\sim\nu^*(-2\Delta_{\infty}+(e-2)\Gamma)+E_1 +\cdots +E_{4g+4},
	\end{align*}
	where $E_i$ is the exceptional divisor over $p_i$. Consider the linear system 
	\[|\nu^*(x\Delta_{\infty}+y\Gamma)-\sum_{i=1}^{4g+4}2E_i|.\]
	If \begin{equation}\label{eq: D}
		(x+1)(y+\frac{ex}{2})+x-3(4g+4)>0
	\end{equation}
	then it contains an irreducible member $D$, then we have $$K_S+D\sim \nu^*((x-2)\Delta_{\infty}+(y+e-2)\Gamma)-\sum_{i=1}^{4g-g}E_i.$$
	If \begin{equation}\label{eq: big}
		(x-2)(y+e-2+\frac{(x-2)e}{2})-\frac{1}{2}(4g+4)>0
	\end{equation}
then $K_S+D$ is big.
	Set $M=\nu^*((x-4)\Delta+(y+e-a-2)\Gamma)$, if 
	\begin{equation}\label{eq: effective}
		(x-3)(y+e-a-1+\frac{(x-4)e}{2})>0
	\end{equation} then we can assume $M\geq 0$. If
	\begin{equation}\label{eq: fixed part}
	(K_S+D)\cdot M=(x-4)(x-2)+(x-2)(y+e-a-2)+(x-4)(y+e-2)<0
	\end{equation}
then $M$ is contained in the fixed part of $|K_S+D|$. Note that $K_S+D-M\sim F$. So $|K_S+D|$ is composed of a pencil $|F|$. In this case the corresponding fiberation is $f\co X\to \Pro^1$, its general fiber has genus $g$ and $D\cdot F=x(g+1+e)+2y-8g-8$.

 We find that $x=8,y=1$, the inequalities (\ref{eq: D}), (\ref{eq: big}), (\ref{eq: effective}) and (\ref{eq: fixed part}) become
\begin{align*}
	&e>\frac{12g-5}{36} \quad (\ref{eq: D})
	&e>\frac{g+4}{12} \quad (\ref{eq: big})\\
	&e>\frac{g+1}{4} \quad (\ref{eq: effective})
	&e<\frac{3g-4}{8} \quad (\ref{eq: fixed part})
\end{align*}
When $g\geq 8$, we have $\frac{3g-4}{8}>\frac{12g-13}{36}>\frac{g+1}{4}>\frac{g+4}{12}$. Since $e$ is an integer, not every $g\geq 8$ the interval $(\frac{12g-13}{36},\frac{3g-4}{8})$ contains an integer, we find that $g=10,e=3$ satisfies all the inequalities, and when $g\geq 27$ we have $(\frac{12g-13}{36},\frac{3g-4}{8})\cap\Z\neq\varnothing$.  We can see that $D\cdot F=8e+2$, therefore the genus $g$ and the intersection number $k=D\cdot F$ can be arbitrarily large.

\end{example}

	\section*{Acknowledgements}
	 I would like to give heatly thanks my Ph.D supervisor JinXing Cai for suggesting generalizing Xiao's result about canonical maps of surface of log general type to log surfaces of log general type, and for his continuous support and encouragement, and to YiFei Chen, Lei Zhang, SongBo Ling, ZhiMin Ling and JingShan Chen for useful discussion. Many parts of this paper was accomplished during my Ph.D. The author was supported in part by the the science and technology innovation fund of Yunnan University: ST20210105.
	
	\bibliographystyle{alpha}
	
	\bibliography{ref}
	
\end{document}